  \newtheorem{definition}{Definizione}[section]
  \newtheorem{theorem}[definition]{Theorem}
  \newtheorem{lemma}[definition]{Lemma}
  \newtheorem{cor}[definition]{Corollary}
  \newtheorem{prop}[definition]{Proposition}
  \newtheorem{example}[definition]{Example}
\title{Extending the extensional level of the Minimalist Foundation to axiomatic set theories}
\author{Samuele Maschio, Pietro Sabelli}
\date{}
\begin{document}
\maketitle
\begin{abstract}
We introduce extensions by rules of the extensional level of the Minimalist Foundation which turn out to be equivalent to constructive and classical axiomatic set theories.
\end{abstract}
\section{Introduction}
Classical mathematics leans on a standard foundational theory, that is Zermelo-Fraenkel axiomatic set theory $\mathbf{ZF}$. The situation in costructive mathematics is very different: there are many foundational theories in the literature and no one of them has already reached the privileged status of ``standard''.

Moreover, the foundational tendency in constructive mathematics changed after Bishop's work (see \emph{A constructive Manifesto} in \cite{BB85}). The modern view on constructivism is far from that of Brouwer's intuitionism or that of Russian computable mathematics. The notion of \emph{compatibility} plays an important role nowaday: constructive mathematics is in fact understood by most mathematicians working in the field as ordinary mathematics done with intuitionistic logic and for this reason it must lay in a common core between classical mathematics, Brouwer's intuitionism and Russian computable mathematics. In particular, a foundational theory corresponding to such a notion of constructivism should be itself a common core between the main classical and intuitionistic, predicative and impredicative foundational theories available in the literature. Maietti and Sambin in \cite{mtt} identified the properties that such a common core foundation should satisfy for meet this requirement. Later in \cite{m09} Maietti proposed a precise foundational theory, called \emph{Minimalist Foundation} (for short $\mathbf{MF}$), satisfying these properties. The formal system $\mathbf{MF}$ consists of two levels formulated as dependent type theories: the intensional level $\mathbf{mTT}$ and the extensional level $\mathbf{emTT}$ connected by a setoid model of the second in the first. The intensional level should be an account of all the computational aspects of the theory, while the extensional level is the one in which ordinary mathematics should be performed. In particular, $\mathbf{mTT}$ should be compatible (as in fact it is shown in \cite{m09}) with type theoretic foundations like Martin-L\"of type theory \cite{NPS90} and Coquand's Calculus of Constructions \cite{COC}, while $\mathbf{emTT}$ should be compatible with axiomatic set-theoretical foundations. In \cite{m09} an argument for the compatibility of $\mathbf{emTT}$ with Aczel's constructive Zermelo-Fraenkel set theory $\mathbf{CZF}$ in \cite{czfnotes} is sketched. In this paper we want to make precise that statement by showing that $\mathbf{emTT}$ is compatible with axiomatic set theories $\mathbf{CZF}$, $\mathbf{IZF}$ and $\mathbf{ZF}$ in a stronger sense. We will in fact extend $\mathbf{emTT}$ with some rules, obtaining a type theory $\mathbf{emTT}_{\mathcal{T}}$ equivalent to the corresponding set theory $\mathcal{T}$. Each of these theories $\mathbf{emTT}_{\mathcal{T}}$ can be seen as an envelope of the theory $\mathcal{T}$ embodying (meta)theoretical concepts of set theory like those of definable class, definable set and $\Delta_0$-formula. 

\section{$\mathbf{emTT}$ in brief}
We present here briefly the extensional level $\mathbf{emTT}$ of the Minimalist Foundation (for a complete description see \cite{m09}) which is formulated as a type theory that can be seen as a variant of Martin-L\"of type theory in \cite{ML84}.

First, $\mathbf{emTT}$ contains four kinds of types which allow us to keep a distinction between logical and mathematical entities and different degrees of complexity:  
\begin{enumerate}
\item {\bf small propositions} include the falsum constant $\bot$ and propositional identities of terms in sets, and are closed under connectives and quantifiers with respect to sets;
\item {\bf propositions} include all small propositions and propositional identities, and are closed under connectives and quantifiers; 
    \item {\bf sets} include the empty set $\mathsf{N}_0$, a singleton set $\mathsf{N}_1$, all small propositions and are closed under constructors $\Sigma$, $\Pi$, $+$, $\mathsf{List}$ and under quotients of sets with respect to small equivalence relations.
    \item each set and each proposition is a collection and {\bf collections} are closed under $\Sigma$ and include power-collections of sets.
\end{enumerate}

Some keypoints are the following:
\begin{enumerate}
\item the elimination rules of propositional constructors act only toward propositions; for this reason the axiom of choice is not a theorem of $\mathbf{emTT}$ as it is in Martin-L\"of type theory, since in particular one cannot produce witnesses for existential statements in general;
\item propositions are proof-irrelevant; every term of a proposition is equal to a canonical term $\mathsf{true}$;
\item propositional identities reflect definitional equalities;
\item extensionality of functions holds:
$$\mathbf{emTT}\vdash(\forall f\in (\Pi x\in A)B)(\forall g\in (\Pi x\in A)B)$$
$$\qquad\qquad\qquad\qquad\qquad((\forall x\in A)(\mathsf{Ap}(f,a)=\mathsf{Ap}(g,a))\rightarrow f=g)$$
\end{enumerate}

\paragraph{Notational convention} For sake of readability we modify the syntax of $\mathbf{emTT}$ in \cite{m09} by writing $a=_{A}b$ instead of $\mathsf{Eq}(A,a,b)$.

\section{A presentation of axiomatic set theories}
In this paper, we will consider variants of axiomatic set theories where the language is a bit different from the usual one (in the most common approaches variables are the only terms), but which are more suitable for our purposes and equivalent to the traditional one. We will still call these theories $\mathbf{CZF}$, $\mathbf{IZF}$ and $\mathbf{ZF}$. The language of $\mathbf{IZF}$ and $\mathbf{ZF}$ consists of terms and formulas, including subclasses of $\Delta_0$-terms and $\Delta_0$-formulas, generated by the following clauses:
\begin{enumerate}
\item every variable $x$ is a term;
    \item $\emptyset$ and $\omega$ are terms;
    \item if $a$ and $b$ are terms, then $\{a,b\}$ is a term;
    \item if $a$ is a term, then $\bigcup a$ and $\mathcal{P}(a)$ are terms;
    \item if $x$ is a variable, $a$ is a term in which $x$ is not free\footnote{ This prevents us from terms like $\{x\in x|\,x=x\}$ which would lead to inconsistency.}, and $\varphi$ is a formula, then $\{x\in a|\,\varphi\}$ is a term;\footnote{In a term of the form $\{x\in a|\,\varphi\}$ the variable $x$ is bounded. This is the only term constructor which bounds variables.}
    \item every variable $x$ is a $\Delta_0$-term;
    \item $\emptyset$ and $\omega$ are $\Delta_0$-terms;
    \item if $a$ and $b$ are $\Delta_0$-terms, then $\{a,b\}$ is a $\Delta_0$-term;
    \item if $a$ is a $\Delta_0$-term, then $\bigcup a$ and $\mathcal{P}(a)$ are $\Delta_0$-terms;
    \item if $x$ is a variable, $a$ is a $\Delta_0$-term in which $x$ is not free, and $\varphi$ is a $\Delta_0$-formula, then $\{x\in a|\,\varphi\}$ is a $\Delta_0$-term;
    \item $\bot$ is a formula;
    \item if $a$ and $b$ are terms, then $a=b$ and $a\in b$ are formulas;
    \item if $\varphi$ and $\psi$ are formulas, then $\varphi\wedge \psi$, $\varphi\vee \psi$ and $\varphi\rightarrow \psi$ are formulas;
    \item if $\varphi$ is a formula and $x$ is a variable, then $\forall x\,\varphi$ and $\exists x\,\varphi$ are formulas.
        \item $\bot$ is a $\Delta_0$-formula;
    \item if $a$ and $b$ are $\Delta_0$-terms, then $a=b$ and $a\in b$ are $\Delta_0$-formulas;
    \item if $\varphi$ and $\psi$ are $\Delta_0$-formulas, then $\varphi\wedge \psi$, $\varphi\vee \psi$ and $\varphi\rightarrow \psi$ are $\Delta_0$-formulas;
    \item if $\varphi$ is a $\Delta_0$-formula, $x$ is a variable and $a$ is a $\Delta_0$-term in which $x$ is not free, then $\forall x\,(x\in a\rightarrow\varphi)$ and $\exists x\,(x\in a\wedge\varphi)$ are $\Delta_0$-formulas.
\end{enumerate}
The language of $\mathbf{CZF}$ consists of terms and formulas generated by the previous clauses with the following changes:
\begin{enumerate}
    \item $\mathcal{P}(a)$ is removed from clause 4.;
    \item clause 5. is modified by adding the condition that $\varphi$ is a $\Delta_0$-formula.
\end{enumerate}

From now on we will restrict ourselves, without loss of generality, only to formulas in which no variable appears both free and under the scope of a quantifier.

\noindent We will adopt some standard abbreviations: $\neg \varphi$ means $\varphi\rightarrow \bot$, $\top$ means $\bot \rightarrow \bot$, $\varphi\leftrightarrow \psi$ means $(\varphi\rightarrow \psi)\wedge(\psi\rightarrow \varphi)$, $a\subseteq b$ is a shorthand for the formula $\forall x(x\in a\rightarrow x\in b)$ where $x$ is a fresh variable and $\exists! x \, \varphi$ is an abbreviation for $\exists x \, \varphi \, \wedge \, \forall x \forall y (\varphi \, \wedge \, \varphi[y/x] \to x = y)$ where $y$ is a fresh variable; we also introduce bounded quantifiers with their usual meanings, namely $\exists x\in t\,\varphi$ means $\exists x(x\in t\wedge \varphi)$ and $\forall x\in t\,\varphi$ means $\forall x(x\in t \rightarrow \varphi)$; moreover, $0$ means $\emptyset$, $1$ means $\{\emptyset\}$, $\{a\}$ means $\{a,a\}$, $(a,b)$ means $\{\{a\},\{a,b\}\}$ and $a\cup b$ means $\bigcup\{a,b\}$. Finally
\begin{enumerate}
    \item $p_1(a)$ means $\bigcup\{x\in \bigcup a|\,\forall y(y\in a\rightarrow x\in y)\}$;
    \item $p_2(a)$ means $\bigcup\{x\in \bigcup a|\,x=p_1(a)\rightarrow a=\{\{p_1(a)\}\}\}$; 
    \item $\ell(a)$ means $\{x\in \omega|\,\exists y\,((x,y)\in a)\}$.
\end{enumerate}
The terms $p_1(a)$ and $p_2(a)$ represent the first and second component of $a$, respectively,  when $a$ has the form  $(b,c)$, while $\ell(a)$ represents the length of $a$, when $a$ is a list, that is a function whose domain is a natural number.

Besides the axioms and rules of intuitionistic first-order logic, the specific axioms of $\mathbf{IZF}$ are the universal closures of the following formulas:
\begin{enumerate}
    \item $\forall z(z\in x\leftrightarrow z\in y)\rightarrow x=y$
    \item $\neg (x\in \emptyset)$
    \item $x\in \{y,z\}\leftrightarrow x=y\vee x=z$
    \item $x\in \bigcup y\leftrightarrow \exists z(x\in z\wedge z\in y)$
    \item $x\in \mathcal{P}(y)\leftrightarrow x\subseteq y$
    \item $z\in \{x\in y|\,\varphi\}\leftrightarrow z\in y \wedge \varphi[z/x]$ for every formula $\varphi$;
    \item $0\in \omega \wedge \forall x \in \omega (x\cup\{x\}\in \omega)\wedge \forall y(0\in y\wedge \forall z\in y( z\cup \{z\}\in y)\rightarrow \omega\subseteq y) $
    \item $\forall x\in z\, \exists y\,\varphi\rightarrow \exists w\,\forall x\in z\,\exists y\in w\, \varphi$ for every formula $\varphi$ in which $w$ is not free;
    \item $\forall x(\forall y \in x \, \varphi[y/x] \rightarrow \varphi)\rightarrow \forall x\,\varphi$ for every formula $\varphi$ in which $y$ is not free.
\end{enumerate}
The axioms of $\mathbf{ZF}$ are those of $\mathbf{IZF}$ with the addition of the law of excluded middle, that is the universal closure of $\varphi\vee \neg\varphi$ for every formula $\varphi$.

The axioms of $\mathbf{CZF}$ are 1,2,3,4,7,9 in the list above and 6.\ with the obvious clause that $\varphi$ is a $\Delta_0$-formula, together with the universal closures of the following:
\begin{enumerate}
   \item[a.] $(\forall x\in z \exists y \varphi)\rightarrow \exists w(\forall x\in z\exists y\in w\,\varphi\wedge \forall y\in w\exists x\in z\,\varphi )$ for every formula $\varphi$ in which $w$ is not free.
    \item[b.] $\forall v\forall w\exists z\forall u(\forall x\in v\exists y\in w\,\varphi\rightarrow \exists z'\in z(\forall x\in v\exists y\in z'\,\varphi\wedge \forall y\in z'\exists x\in v\,\varphi))$ for every formula $\varphi$ in which $z$ is not free.
\end{enumerate}
These last two axiom schemas are called \emph{strong collection} and \emph{subset collection}, respectively. For further details on $\mathbf{CZF}$ the reader can refer to \cite{czfnotes}.

\section{The type theory $\mathbf{emTT_{\mathcal{T}}}$}\label{setrules}
For $\mathcal{T}$ being $\mathbf{CZF}$, $\mathbf{IZF}$ or $\mathbf{ZF}$, we define the type theory $\mathbf{emTT_{\mathcal{T}}}$ by adding rules to $\mathbf{emTT}$. 

The idea behind the extensions is that collections can be thought as definable classes of set theory, sets (of type theory) as definable classes which can be proven to be sets, while sets (of set theory) correspond to elements of a universal collection $\mathbf{V}$.

We will add the rules in the following four steps.

\subsection*{Step 1: Collections-as-definable-classes}
The first step consists in forcing the identification between collections of type theory and definable classes of set theory. These rules are included in $\mathbf{emTT}_{\mathcal{T}}$ for each $\mathcal{T}$.

The first thing to do is to introduce a universal collection $\mathbf{V}$\footnote{The universal collection $\mathbf{V}$ should not be intended as a universe in type-theoretical sense, but it has to be thought as a set-theoretical universe.}:
$$\cfrac{}{\mathbf{V}\,col}\qquad\cfrac{a\in A}{a\in \mathbf{V}}$$
We also need to norm the relation between definitional equality in an arbitrary collection and in the universal collection:
$$\cfrac{a=b\in A}{a=b\in \mathbf{V}}\qquad\cfrac{a\in A\qquad b\in A\qquad a=b\in \mathbf{V}}{a=b\in A}$$ $$ \cfrac{a\in A\qquad b\in \mathbf{V}\qquad a=b\in \mathbf{V}}{b\in A}$$
Moreover, we require the definitional equality with respect to $\mathbf{V}$ to be a small proposition:
$$\cfrac{a\in \mathbf{V}\qquad b\in \mathbf{V}}{a=_{\mathbf{V}}b\, prop_s}$$
We also need to introduce a new atomic small proposition (representing set-theoretic membership) and a new atomic proposition (internalizing type-theoretic membership):
$$ \cfrac{a\in \mathbf{V}\qquad b\in \mathbf{V}}{a\,\varepsilon\,b\,prop_s}\qquad
\cfrac{a\in \mathbf{V}\qquad A\,col}{a\,\varepsilon\,A\,prop}
$$
$$\cfrac{a\in A}{\mathsf{true}\in a\,\varepsilon\,A}\qquad \cfrac{A \; col \qquad \mathsf{true}\in a\,\varepsilon\, A}{a\in A}$$
with the relative rules saying that the new constructors are well-behaved with respect to definitional equality:
$$
\quad \cfrac{a = a'\in \mathbf{V}\qquad b = b'\in \mathbf{V}}{a\,\varepsilon\,b=a'\,\varepsilon\,b'\,prop_s}
\qquad
\cfrac{a=a'\in \mathbf{V}\qquad A=A'\,col}{a\,\varepsilon\,A=a'\,\varepsilon\,A'\,prop}
$$

We introduce now a new constructor which allows us to form collections as the result of a comprehension, that is, to include definable classes among collections
$$\cfrac{\varphi \,prop\,[x\in \mathbf{V}]}{\{x|\,\varphi\}\,col}$$
together with a rule describing the desired relationship between the new constructor and propositional membership, and with a rule of extensional equality for collections:\footnote{Note that, thanks to extensionality, we do not need to add a specific rule saying that the constructor $\{x|\,\varphi\}$ is well-behaved with respect to definitional equality. Indeed, the rule $$\cfrac{\varphi=\psi\,prop\,[x\in \mathbf{V}]}{\{x|\,\varphi\}=\{x|\,\psi\}\,col}$$
is derivable.}
$$\cfrac{\varphi\,prop\,[x\in \mathbf{V}]\qquad a\in \mathbf{V}}{\mathsf{true}\in \varphi[a/x]\leftrightarrow a\,\varepsilon\,\{x|\,\varphi\}}
\qquad
\cfrac{\mathsf{true}\in (\forall x\in \mathbf{V})(x\,\varepsilon\,A\leftrightarrow x\,\varepsilon\,B)}{A=B\,col}$$
The previous rules will, in turn, force the desired identification between collections and definable classes. Indeed, we can now derive the following rule:
$$
\cfrac{A\,col}{A=\{x|\,x\,\varepsilon\,A\}\,col}
$$

Finally, we add the following four rules aiming to describe bounded quantifiers in terms of quantifiers over the universal collection $\mathbf{V}$.

$$\cfrac{\varphi\,prop\,[x\in A]}{x\,\varepsilon\,A\wedge \varphi\,prop\,[x\in \mathbf{V}]}\qquad \cfrac{\varphi\,prop\,[x\in A]}{\mathsf{true}\in(\exists x\in A)\varphi\leftrightarrow(\exists x\in \mathbf{V})(x\,\varepsilon\,A\wedge \varphi)}$$

$$\cfrac{\varphi\,prop\,[x\in A]}{x\,\varepsilon\,A\rightarrow \varphi\,prop\,[x\in \mathbf{V}]}\qquad \cfrac{\varphi\,prop\,[x\in A]}{\mathsf{true}\in(\forall x\in A)\varphi\leftrightarrow(\forall x\in \mathbf{V})(x\,\varepsilon\,A\rightarrow \varphi)}$$

The two rules in the left side could look dangerous, since one could prove in $\mathbf{emTT}_{\mathcal{T}}$ that $b\varepsilon A\rightarrow \varphi[b/x]\,prop$ whenever $\varphi\,prop\,[x\in A]$ and $b\in \mathbf{V}$, without being able to prove  $\varphi[b/x]$ to be itself a proposition. However, this is not a real problem, since one can never prove that $\varphi[b/x]$ is true applying the elimination rules of conjunction and implication without being able to prove that $b$ is in $A$. 

Notice that in the practice of mathematics such expressions are nothing new, consider e.g.\ a proposition like $x\in \mathbb{N}^+\rightarrow \frac{x}{x}=1$, where the consequent $\frac{x}{x}=1$ makes sense only if we already know that $x$ is a positive number.

\subsection*{Step 2: Sets-as-definable-sets}
The rules in this step aim to identify type-theoretic sets with definable sets of set theory.
As in the previous step, they are all included in $\mathbf{emTT}_{\mathcal{T}}$ for every $\mathcal{T}$. 

The first rule states that if a collection is extensionally equal to an element of $\mathbf{V}$, then it is a set.  
$$\cfrac{A\, col\qquad \mathsf{true}\in (\exists y\in \mathbf{V}) (\forall x\in \mathbf{V})(x\,\varepsilon\, A\leftrightarrow x\,\varepsilon\, y)}{A\,set}$$
The converse is obtained by introducing a name in the universal collection $\mathbf{V}$ for each definable set: 
$$\cfrac{A\,set}{\lceil A\rceil\in \mathbf{V}}\qquad\qquad  \cfrac{A\,set}{\mathsf{true}\in (\forall x\in \mathbf{V})(x\,\varepsilon\,A\leftrightarrow x\,\varepsilon\,\lceil A\rceil)}$$

Finally, we add three rules which collapse all type definitional equalities to definitional equalities between collections.
$$\cfrac{A=B\,col\qquad A\,set\qquad B\,set}{A=B\,set}\qquad\cfrac{A=B\,col\qquad A\,prop\qquad B\,prop}{A=B\,prop}$$
$$\cfrac{A=B\,col\qquad A\,prop_s\qquad B\,prop_s}{A=B\,prop_s}$$

Notice that, using names $\lceil A\rceil$ in the universal collection $\mathbf{V}$, it can be easily shown that if two collections are equal and one of them is a set, then the other is a set too. 

\subsection*{Step 3: Axiomatic set theory via the universal collection}
In this step the axioms of set theory are embodied in the system via the universal collection $\mathbf{V}$.  We adopt here the usual abbreviations $\neg$ and $\leftrightarrow$.

The following group of rules is shared by $\mathbf{emTT}_{\mathbf{CZF}}$, $\mathbf{emTT}_{\mathbf{IZF}}$ and $\mathbf{emTT}_{\mathbf{ZF}}$:


$$\cfrac{a\in \mathbf{V}\qquad b\in \mathbf{V}}{\mathsf{true}\in (\forall x\in \mathbf{V})(x\,\varepsilon\,a\leftrightarrow x\,\varepsilon\,b)\rightarrow a=_{\mathbf{V}}b}$$

$$\cfrac{}{\emptyset\in \mathbf{V}}\qquad \cfrac{a\in \mathbf{V}}{\mathsf{true}\in \neg(a\,\varepsilon\,\emptyset)}$$

$$\cfrac{a\in \mathbf{V}\qquad b\in \mathbf{V}}{\{a,b\}\in \mathbf{V}}\qquad \cfrac{a\in \mathbf{V}\qquad b\in \mathbf{V}\qquad c\in \mathbf{V}}{\mathsf{true}\in c\,\varepsilon\,\{a,b\}\leftrightarrow c=_{\mathbf{V}} a \vee c=_{\mathbf{V}}b}$$

$$\cfrac{a\in \mathbf{V}}{\bigcup a\in \mathbf{V}}\qquad\cfrac{a\in \mathbf{V}\qquad b\in \mathbf{V}}{\mathsf{true}\in b\,\varepsilon\,\bigcup a \leftrightarrow (\exists x\in \mathbf{V})(b\,\varepsilon\,x\wedge x\,\varepsilon \,a)}$$

$$\cfrac{}{\omega \in \mathbf{V}}\qquad \cfrac{}{\mathsf{true}\in \mathsf{Trans}(\omega)}$$
$$\cfrac{a\in \mathbf{V}}{\mathsf{true}\in a\,\varepsilon\,\omega\rightarrow (\forall y\in \mathbf{V})(\mathsf{Trans}(y) \rightarrow a\,\varepsilon\, y)}$$
where $\mathsf{Trans}(y)$ is an abbreviation for 
$$\emptyset\,\varepsilon\,y\wedge (\forall z\in \mathbf{V})(z\,\varepsilon\,y\rightarrow \bigcup\{z,\{z,z\}\}\,\varepsilon\, y)$$


$$\cfrac{a\in \mathbf{V}\qquad \varphi\,prop\,[x\in \mathbf{V}]}{\mathsf{true}\in(\forall x\in \mathbf{V})( (\forall y\in \mathbf{V})(y\,\varepsilon\, x\rightarrow \varphi[y/x])\rightarrow \varphi)\rightarrow \varphi[a/x]}$$

\paragraph{Specific rules of $\mathbf{emTT}_{\mathbf{IZF}}$ and $\mathbf{emTT}_{\mathbf{ZF}}$.}
The following rules are included only in $\mathbf{emTT}_{\mathbf{IZF}}$ and in $\mathbf{emTT}_{\mathbf{ZF}}$:
$$\cfrac{a\in \mathbf{V}}{\mathcal{P}(a)\in \mathbf{V}}\qquad \cfrac{a\in \mathbf{V}\qquad b\in \mathbf{V}}{\mathsf{true}\in b\,\varepsilon \,\mathcal{P}(a)\leftrightarrow (\forall x\in \mathbf{V})(x\,\varepsilon\,b\rightarrow x\,\varepsilon\,a)}$$

$$\cfrac{a\in \mathbf{V}\qquad \varphi\,prop\,[x\in \mathbf{V}]}{\{x\,\varepsilon\, a|\,\varphi\}\in \mathbf{V}}\qquad \cfrac{a\in \mathbf{V}\qquad \varphi\,prop\,[x\in \mathbf{V}]\qquad b\in \mathbf{V}}{\mathsf{true}\in b\,\varepsilon\,\{x\,\varepsilon\,a|\,\varphi\}\leftrightarrow b\,\varepsilon\,a\wedge\varphi[b/x]}$$

$$\cfrac{\begin{array}{l}
a\in \mathbf{V}\qquad\varphi\,prop\,[x\in \mathbf{V},y\in \mathbf{V}]\\
\mathsf{true}\in (\forall x\in \mathbf{V})(x\,\varepsilon\,a\rightarrow (\exists y\in \mathbf{V})\varphi)\\
\end{array}}{\mathsf{true}\in(\exists z\in \mathbf{V})((\forall x\in \mathbf{V})(x\,\varepsilon\, a\rightarrow (\exists y\in \mathbf{V})(y\,\varepsilon\,z\wedge \varphi)))}$$

The law of excluded middle is obviously included only in $\mathbf{emTT}_{\mathbf{ZF}}$:
$$\cfrac{\varphi\,prop}{\mathsf{true}\in \varphi \vee \neg \varphi}$$

\paragraph{Specific rules of $\mathbf{emTT}_{\mathbf{CZF}}$.}
The following rules are included only in $\mathbf{emTT}_{\mathbf{CZF}}$:
$$\cfrac{a\in \mathbf{V}\qquad \varphi\,prop_s\,[x\in \mathbf{V}]}{\{x\,\varepsilon\, a|\,\varphi\}\in \mathbf{V}}\qquad \cfrac{a\in \mathbf{V}\qquad \varphi\,prop_s\,[x\in \mathbf{V}]\qquad b\in \mathbf{V}}{\mathsf{true}\in b\,\varepsilon\,\{x\,\varepsilon\,a|\,\varphi\}\leftrightarrow b\,\varepsilon\,a\wedge\varphi[b/x]}$$

$$\cfrac{\varphi\,prop\,[x\in \mathbf{V},y\in \mathbf{V},z\in \mathbf{V}]}{\mathsf{true}\in \mathsf{SCol}(\varphi)}$$
where $\mathsf{SCol}(\varphi)$ is: 
{\small $$(\forall x\in \mathbf{V})(x\,\varepsilon\, z\rightarrow (\exists y\in \mathbf{V}) \varphi)\rightarrow $$
$$(\exists w\in \mathbf{V})((\forall x\in \mathbf{V})(x\,\varepsilon\, z\rightarrow (\exists y\in \mathbf{V})(y\,\varepsilon\, w\wedge\varphi))\wedge (\forall y\in \mathbf{V})(y\,\varepsilon\, w \rightarrow(\exists x\in \mathbf{V})(x\,\varepsilon\, z\wedge\varphi )))$$}

$$\cfrac{\varphi\,prop\,[x\in \mathbf{V},y\in \mathbf{V},z\in \mathbf{V},v\in \mathbf{V},w\in \mathbf{V},u\in \mathbf{V},z'\in \mathbf{V}]}{\mathsf{true}\in \mathsf{SubCol}(\varphi)}$$
where $\mathsf{SubCol}(\varphi)$ is:
{\small $$(\forall v\in \mathbf{V})(\forall w\in \mathbf{V})(\exists z\in \mathbf{V})(\forall u \in \mathbf{V})\Big((\forall x\in \mathbf{V})(x\,\varepsilon\, v\rightarrow (\exists y\in \mathbf{V})(y\,\varepsilon\, w\wedge \varphi))\rightarrow $$
$$(\exists z'\in \mathbf{V})(z'\,\varepsilon\, z\wedge (\forall x\in \mathbf{V})(x\,\varepsilon\, v\rightarrow (\exists y\in \mathbf{V})(y\,\varepsilon\, z'\wedge\varphi))\wedge (\forall y\in \mathbf{V})(y\,\varepsilon\, z'\rightarrow (\exists x\in \mathbf{V})(x\,\varepsilon\, v\,\wedge \varphi)))\Big)$$ }

Let us conclude this step with two remarks. First, we can notice that the axiom of extensionality of set theory which we embodied in the system as the first rule above in this step guarantee that the newly defined term constructors relative to $\mathbf{V}$ (including $\lceil A\rceil$) are well-behaved with respect to definitional equality. Indeed, each of these constructors appears in the theory equipped with a rule which describes exactly its elements.

Moreover, we can establish a binary correspondance (up to the respective notions of equality) between type-theoretic sets and terms of type $\mathbf{V}$:
$$\qquad \qquad\; A\longmapsto \lceil A\rceil$$
$$\{x|\,x\,\varepsilon\,a\}\longmapsfrom a$$
Indeed we can derive from the content of Step 2
$$
\cfrac{A\,set}{A=\{x|\,x\,\varepsilon\,\lceil A\rceil\}\,set}
$$
while, using the axiom of extensionality in this step, we can prove
$$
\cfrac{a \in \mathbf{V}}{a=\big\lceil \{x|\,x\,\varepsilon\,a\}\big\rceil \in \mathbf{V}}
$$





\subsection*{Step 4: Interpretation-as-rules}
To recover the usual interpretation of types as sets (or classes), it suffices to specify how the canonical elements of each type are interpreted. This is done via the following rules which are included in $\mathbf{emTT}_{\mathcal{T}}$ for each $\mathcal{T}$.

$$\cfrac{}{\star = \emptyset \in \mathsf{N_1}}$$
$$\cfrac{A\,col\qquad B\,col\,[x\in A]\qquad a\in A\qquad b\in B[a/x]}{\langle a,b\rangle=(a,b)\in (\Sigma x\in A)B}$$
where $(t,s)$ means $\{\{t\},\{t,s\}\}$ and $\{t\}$ means $\{t,t\}$.
$$\cfrac{A\,set\qquad B\,set\,[x\in A]\qquad b\in B \, [x \in A]}{\lambda x^A.b = \{ z \,\varepsilon\, \lceil(\Sigma x\in A)B\rceil \, | \, (\exists x \in A)(z =_{\mathbf{V}} (x,b)) \} \in (\Pi x\in A)B }$$
where $z$ is a fresh variable.
$$\cfrac{A\, set\qquad B\,set\qquad a\in A}{\mathsf{inl}(a)=(\emptyset,a)\in A+B}\qquad\cfrac{A\, set\qquad B\,set\qquad b\in B}{\mathsf{inr}(b)=(\{\emptyset\},b)\in A+B}$$
$$\cfrac{A\,set}{\epsilon=\emptyset\in \mathsf{List}(A)}\qquad \cfrac{A\,set\qquad a\in \mathsf{List}(A)\qquad b\in A}{\mathsf{cons}(a,b)=\bigcup \{a,\{(\ell(a),b)\}\}\in \mathsf{List}(A)}$$
where $\ell(a):=\mathsf{El}_{\mathsf{List}}(a,\emptyset, (x,y,z)\bigcup\{z,\{z\}\})$ is a term of type $\{x|\,x\,\varepsilon\, \omega\}$.

$$\cfrac{\begin{array}{l}A\,set\qquad R\,prop_s\,[x\in A, y\in A]\\
\mathsf{true}\in (\forall x\in A)R[x/y]\\
\mathsf{true}\in (\forall x\in A)(\forall y\in A)(R\leftrightarrow R[y/x,x/y])\\
\mathsf{true}\in (\forall x\in A)(\forall y\in A)(\forall z\in A)(R\wedge R[y/x,z/y]\rightarrow R[z/y])
\\ a\in A\end{array}}{[a]=\{x\,\varepsilon\, \lceil A\rceil |\,R[a/y]\}\in A/R}$$

$$\cfrac{\varphi\,prop_s}{[\varphi]=\{x\,\varepsilon\,\{\emptyset\}|\,x=_{\mathbf{V}}\emptyset\wedge \varphi\}\in \mathcal{P}(1)}$$
where $x$ is a fresh variable.
$$\cfrac{A\,set\qquad b\in \mathcal{P}(1)\,[x\in A]}{\lambda x^A.b = \lceil\{ z  | \, (\exists x \in A)(z =_{\mathbf{V}} (x ,b)) \} \rceil\in A \to \mathcal{P}(1) }$$
where $z$ is a fresh variable and the rule of strong-collection in $\mathbf{CZF}$, or replacement and separation in case of $\mathbf{IZF}/\mathbf{ZF}$, together with the fact that $A$ is assumed to be a set, guarantee that the right-hand side of the conclusion is well-defined. 

$$\cfrac{\varphi\,prop \qquad \mathsf{true} \in \varphi}{\mathsf{true} = \emptyset \in \varphi}$$

Then, thanks to extensional equality for collections and the elimination and $\eta$-conversion rules of $\mathbf{emTT}$, we can derive the following rules characterizing sets and collections as definable classes and in which the variable $z$ is always assumed to be a fresh variable.

$$\cfrac{}{\mathsf{N}_{0}=\{z|\,\bot\}\,col}\qquad\cfrac{}{\mathsf{N}_{1}=\{z|\,z=_{\mathbf{V}}\emptyset\}\,col}$$
$$\cfrac{A\,col\qquad B\,col\,[x\in A]}{(\Sigma x\in A)B=\{z|\,(\exists x\in A)(\exists y\in B)(z=_{\mathbf{V}}(x,y))\}\,col}$$
$$\cfrac{A\,set\qquad B\,set\,[x\in A]}{(\Pi x\in A)B=\{z|\,\mathsf{Rel}(z,A,B)\wedge \mathsf{Svl}(z)\wedge \mathsf{Tot}(z,A)\}\,col}$$
where
\begin{enumerate}
    \item $\mathsf{Rel}(z,A,B)$ is $(\forall w\in \mathbf{V})(w\,\varepsilon\,z\rightarrow (\exists x\in A)(\exists y\in B)(w=_{\mathbf{V}}(x,y)))$
    \item $\mathsf{Svl}(z)$ is $(\forall x\in \mathbf{V})(\forall y\in \mathbf{V})(\forall y'\in \mathbf{V})((x,y)\,\varepsilon\,z\wedge (x,y')\,\varepsilon\, z\rightarrow y=_\mathbf{V}y')$
    \item $\mathsf{Tot}(z,A)$ is $(\forall x\in A)(\exists y\in \mathbf{V})((x,y)\,\varepsilon\,z)$
\end{enumerate}
$$\cfrac{A\,set\qquad B\,set}{A+B=\{z|\,(\exists y\in A)(z=_{\mathbf{V}}(\emptyset,y))\vee (\exists y\in B)(z=_{\mathbf{V}}(\{\emptyset\},y))\}\,col}$$
$$\cfrac{A\,set}{\mathsf{List}(A)=\{z|\,(\exists n\in \mathbf{V})(n\,\varepsilon\,\omega \wedge \mathsf{Rel}(z,n,A)\wedge \mathsf{Svl}(z)\wedge \mathsf{Tot}(z,n) )\}\,col}$$
where 
\begin{enumerate}
    \item $\mathsf{Rel}(z,n,A)$ is $(\forall w\in \mathbf{V})(w\,\varepsilon\,z\rightarrow (\exists x\in \mathbf{V})(\exists y\in A)(w=_{\mathbf{V}}(x,y)\wedge x\,\varepsilon\,n))$
    \item $\mathsf{Tot}(z,n)$ is $(\forall x\in \mathbf{V})(x\,\varepsilon\,n\rightarrow(\exists y\in \mathbf{V})((x,y)\,\varepsilon\,z))$
\end{enumerate}

$$\cfrac{\begin{array}{l}A\,set\qquad R\,prop_s\,[x\in A, y\in A]\\
\mathsf{true}\in (\forall x\in A)R[x/y]\\
\mathsf{true}\in (\forall x\in A)(\forall y\in A)(R\leftrightarrow R[y/x,x/y])\\
\mathsf{true}\in (\forall x\in A)(\forall y\in A)(\forall z\in A)(R\wedge R[y/x,z/y]\rightarrow R[z/y]) \\
\end{array}}{A/R=\{z|\,(\exists x\in A)(\forall y\in \mathbf{V})(y\,\varepsilon\, z\leftrightarrow y\,\varepsilon\,A\wedge R)\}\,col}$$
$$\cfrac{}{\mathcal{P}(1)=\{z|\,(\forall y\in \mathbf{V})(y\,\varepsilon\,z\rightarrow y=_{\mathbf{V}}\emptyset)\}\,col}$$
$$\cfrac{A\,set}{A\rightarrow \mathcal{P}(1)=\{z|\,\,\mathsf{Rel}(z,A,\mathcal{P}(1))\wedge \mathsf{Svl}(z)\wedge \mathsf{Tot}(z,A)\}\,col}$$
$$\cfrac{\varphi\,prop}{\varphi=\{z|\,z=_{\mathbf{V}}\emptyset\wedge \varphi\}\,col}$$

As a byproduct of this last rule together with the fact that propositions which are equal as collections are equal (as propositions), we obtain that two propositions $\varphi$ and $\psi$ are equal if and only if they are equivalent, that is $\mathsf{true}\in \varphi\leftrightarrow \psi$.

Finally, notice that we do not need to add rules for the interpretation of the elimination terms, since the relative computation rules in $\mathbf{emTT}$ suffice to uniquely determine them.

\section{Translations}\label{sec}
In this section we introduce two translations: one from the syntax of the set theory $\mathcal{T}$ to the pre-syntax of $\mathbf{emTT}_{\mathcal{T}}$. The other one in the opposite direction.
\\
\\
The pre-syntax of $\mathbf{emTT}_{\mathcal{T}}$ (for $\mathcal{T}$ equal to $\mathbf{CZF}$, $\mathbf{IZF}$ and $\mathbf{ZF}$) is defined as the following grammar, where $A$ and $B$ are metavariables for pre-collections, $a,b$ and $c$ for pre-terms,  $\varphi$ and $\psi$ for pre-propositions, and $x,y$ and $z$ for variables.
\begin{enumerate}
\item[] $A\textnormal{ pre-collection } ::=$
\begin{enumerate}
\item[] $\,\mathsf{N}_0 |  \, \mathsf{N}_1  |  \, \mathsf{List}(A)  |  \, A + B |  \, (\Sigma x \in A)B  | \, (\Pi x \in A)B | $
\item[] $\, A/\varphi |  \, \mathcal{P}(1)  |  \, A \to \mathcal{P}(1)  |  \, \{ x |\, \varphi \} |  \, \varphi  |  \, \mathbf{V}$
\end{enumerate}
\item[] $a\textnormal{ pre-term}\footnote{Notice that we decided to annotate some of the pre-terms in order to keep track of pieces of information which are crucial for an effective translation.} ::=$
\begin{enumerate}
   \item[]   $ \, x 
    |  \, \mathsf{emp}_0(a) 
    |  \, \star 
    |  \, \mathsf{El}_{\mathsf{N_1}}(a,b) 
    |  \, \epsilon 
    |  \, \mathsf{cons}(a,b) 
    |  \, \mathsf{El}^A_{\mathsf{List}}(a,b,(x,y,z)c)| $
    \item[]
    $  \, \mathsf{inl}(a) 
    |  \, \mathsf{inr}(a) 
    |  \, \mathsf{El}_+(a,(x)b,(y)c) 
    |  \, \langle a, b \rangle 
    |  \, \mathsf{El}_\Sigma(a,(x,y)b) 
    |  \, \lambda x^A.a 
    |  \, \mathsf{Ap}(a,b)| $
    \item[]
    $ \, [a]_{A,(x,y)\varphi} 
    |  \, \mathsf{El}_{A/(x,y)\varphi}(a,(x)b) 
    |  \, \mathsf{true} 
    | \, [\varphi]
    |  \, \lceil A \rceil 
    |  \, \emptyset 
    |  \, \{ a, b \} 
    |  \, \bigcup a 
    |  \, \mathcal{P}(a) 
    |  \, \{ x \varepsilon a |\, \varphi \} 
    |  \, \omega$
\end{enumerate}
\item[] $\varphi\textnormal{ pre-proposition } ::=$
\begin{enumerate}
\item[]
    $   \, \bot 
    |  \, a \,\varepsilon\, b 
    |  \, a \,\varepsilon\, A 
    |  \, a=_Ab 
    |  \, \varphi \to \psi 
    |  \, \varphi \wedge \psi 
    |  \, \varphi \vee \psi 
    |  \, (\exists x \in A)\varphi 
    |  \, (\forall x \in A)\varphi $
\end{enumerate}

\end{enumerate}

\emph{Pre-contexts} of $\mathbf{emTT}_{\mathcal{T}}$ are finite lists of declarations of variables in pre-collection defined by the following clauses:
\begin{enumerate}
    \item the empty list $[\;]$ is a pre-context;
    \item if $\Gamma$ is a pre-context, $x$ is a variable not appearing in $\Gamma$ and $A$ is a pre-collection, then $[\Gamma, x\in A]$ is a pre-context.
\end{enumerate}

The first translation is then defined as follows:
\begin{definition} 
 Every term $a$ of $\mathcal{T}$ is translated into a pre-term $\widetilde{a}$ of $\mathbf{emTT}_{\mathcal{T}}$ and every formula $\varphi$ of $\mathcal{T}$ is translated into a pre-proposition $\widetilde{\varphi}$ of $\mathbf{emTT}_{\mathcal{T}}$ according to the following clauses:
\begin{enumerate}
  \item $\widetilde{x}:=x$;
     \item $\widetilde{\emptyset}:=\emptyset$ and $\widetilde{\omega}:=\omega$;
     \item $\widetilde{\{a,b\}}:=\{\widetilde{a},\widetilde{b}\}$, $\widetilde{\bigcup a}:=\bigcup \widetilde{a}$ and $\widetilde{\mathcal{P}(a)}:=\mathcal{P}(\widetilde{a})$;
     \item $\widetilde{\{x\in a|\,\varphi\}}:=\{x\,\varepsilon\, \widetilde{a}|\,\widetilde{\varphi}\}$;
    \item $\widetilde{a=b}:\equiv \widetilde{a}=_{\mathbf{V}}\widetilde{b}$ and $\widetilde{a\in b}:\equiv \widetilde{a}\,\varepsilon\,\widetilde{b}$;
    \item $\widetilde{\bot}:\equiv\bot$;
    \item $\widetilde{\varphi \wedge \psi}:\equiv\widetilde{\varphi}\,\kappa\, \widetilde{\psi}$ for $\kappa$ being $\wedge$, $\vee$ or $\rightarrow$;
    \item $\widetilde{\exists x\,\varphi}:\equiv(\exists x\in \mathbf{V})\widetilde{\varphi}$ and  $\widetilde{\forall x\,\varphi}:\equiv(\forall x\in \mathbf{V})\widetilde{\varphi}$
\end{enumerate}
\end{definition}

Now we define the second translation. The idea behind this translation is that: 
\begin{enumerate}
\item each pre-collection of $\mathbf{emTT}_{\mathcal{T}}$ is translated into a formula $\eta_A$ depending on a fresh variable $u$ which would determine the interpretation of $A$ by comprehension: $A$ should be interpreted as $\{u|\,\eta_A\}$;
    \item each pre-term $a$ of $\mathbf{emTT}_{\mathcal{T}}$ is translated into a formula $\delta_a$ of $\mathcal{T}$, depending on a fresh variable $u$, providing a well-defined interpretation of $a$: $a$ should be interpreted as the unique $u$ for which $\delta_a$ holds;
    \item each pre-propositions of $\mathbf{emTT}_{\mathcal{T}}$ is translated into a formula of $\mathcal{T}$ having the same free variables.

\end{enumerate}

\begin{definition} Let $u$ be a new variable added to the language of $\mathcal{T}$.\footnote{The choice of adding a new variable to the language of set theory is dictated by the advantage which the use of a uniform fresh variable could bring. In fact, the role of $u$ is nothing but that of a placeholder.} For every pre-collection $A$ of $\mathbf{emTT}_{\mathcal{T}}$ we define a formula $\eta_A$, for every pre-term $a$ of $\mathbf{emTT}_{\mathcal{T}}$ we define a formula $\delta_A$ of $\mathbf{emTT}_{\mathcal{T}}$ and we translate every pre-proposition $\varphi$ of $\mathbf{emTT}_{\mathcal{T}}$ into a formula $\widehat{\varphi}$ of $\mathcal{T}$ according to the following clauses, where the variables $v,w,w',w'',w_1,w_2,w_3$ and $n$ are meant to be fresh:
\begin{enumerate}
\item $\widehat{\bot}:\equiv\bot$;
\item $\widehat{a\,\varepsilon\,b}:\equiv \exists u\exists v(\delta_a\wedge \delta_b[v/u]\wedge u\in v)$; 
\item $\widehat{a\,\varepsilon\,A}:\equiv \exists u(\delta_a\wedge \eta_A)$;
\item $\widehat{a=_A b}:\equiv \exists u(\delta_a\wedge \delta_b\wedge \eta_A) $;
\item $\widehat{\varphi\, \kappa\, \psi}:\equiv\widehat{\varphi}\,\kappa\, \widehat{\psi}$ for $\kappa$ being $\wedge$, $\vee$ or $\rightarrow$;
\item $\widehat{(\forall x\in A)\varphi}:\equiv\forall x(\eta_A[x/u]\rightarrow \widehat{\varphi})$;
\item $\widehat{(\exists x\in A)\varphi}:\equiv\exists x(\eta_A[x/u]\wedge \widehat{\varphi})$;
\item $\delta_x:\equiv u=x$;
\item $\eta_{\varphi}:\equiv u=0\wedge \widehat{\varphi}$ and $\delta_{\mathsf{true}}:\equiv u=0$;
\item $\eta_{\mathsf{N}_0}:\equiv\bot$ and  $\delta_{\mathsf{emp}_0(a)}:\equiv u=0$;
\item $\eta_{\mathsf{N}_1}:\equiv u=0$, $\delta_{\star}:\equiv u=0$ and  $\delta_{\mathsf{El}_{\mathsf{N}_{1}}(a,b)}:\equiv\delta_b$;
\item $\eta_{(\Sigma x\in A)B}:\equiv \exists v\exists w(\eta_A[v/u]\wedge \eta_B[v/x,w/u]\wedge u=(v,w))$, 
\item[] $\delta_{\langle a,b\rangle}:\equiv\exists v \exists w(\delta_a[v/u]\wedge \delta_b[w/u]\wedge u=(v,w))$ and 
\item[] $\delta_{\mathsf{El}_{\Sigma}(a,(x,y)b)}:\equiv\exists v(\delta_{a}[v/u]\wedge \delta_{b}[p_1(v)/x,p_2(v)/y])$;
\item $\eta_{(\Pi x\in A)B}:\equiv \forall v(v\in u \rightarrow \exists w\exists w'(v=(w,w')\wedge \eta_A[w/u]\wedge \eta_B[w/x,w'/u])) \wedge$
\item[] $   \qquad\qquad\qquad\forall w\forall w'\forall w''((w,w')\in u \wedge (w,w'')\in u\rightarrow w'=w'')\wedge$
\item[] $\qquad\qquad\qquad\forall w(\eta_A[w/u]\rightarrow \exists w'((w,w')\in u))$,
\item[] $\delta_{\lambda x^A. b}:\equiv\forall v(v\in u \leftrightarrow \exists w\exists w'(\eta_A[w/u]\wedge \delta_b[w/x,w'/u]\wedge v=(w,w')))$ and
\item[] $\delta_{\mathsf{Ap}(a,b)}:\equiv\exists v\exists w(\delta_a[v/u]\wedge \delta_b[w/u]\wedge u=p_2(\bigcup\{z\in v|\,p_1(z) = w\}))$; 
\item $\eta_{A+B}:\equiv\exists v(\eta_A[v/u]\wedge u=(0,v))\vee \exists w(\eta_B[w/u]\wedge u=(1,w))$,
\item[] $\delta_{\mathsf{inl}(a)}:\equiv \exists v(\delta_{a}[v/u]\wedge u=(0,v))$, $\delta_{\mathsf{inr}(a)}:\equiv \exists v(\delta_{a}[v/u]\wedge u=(1,v))$ and
\item[] $\delta_{\mathsf{El}_{+}(a,(x)b,(y)c)}:\equiv$
\item[] $\qquad\qquad\exists v(\delta_a[v/u]\wedge ((p_1(v)=0\wedge \delta_b[p_2(v)/x])\vee(p_1(v)=1\wedge \delta_c[p_2(v)/y])))$;
\item $\eta_{\mathsf{List}(A)}:\equiv\exists n(n\in \omega\wedge$
\item[]$\qquad\qquad\qquad\qquad \forall v(v\in u \leftrightarrow \exists w\exists w'(w\in n\wedge \eta_A[w'/u]\wedge v=(w,w')))\wedge$
\item[]$\qquad \qquad\qquad\qquad \forall w\forall w'\forall w''((w,w')\in u\wedge (w,w'')\in u\rightarrow w'=w'')\wedge$
\item[] $\qquad \qquad\qquad\qquad \forall w(w\in n\rightarrow \exists w'((w,w')\in u)) )$,
\item[] $\delta_{\epsilon}:\equiv u=0$,
\item[] $\delta_{\mathsf{cons}(a,b)}:\equiv\exists v\exists w(\delta_a[v/u]\wedge \delta_b[w/u]\wedge u=v\cup\{(\ell(v),w)\})$ and 
\item[] $\delta_{\mathsf{El}^A_{\mathsf{List}}(a,b,(x,y,z)c)}:\equiv \exists f\Big($
\item[] $\forall w(w\in f\rightarrow \exists w_1\exists w_2(\eta_{\mathsf{List}(A)}[w_1/u]\wedge w=(w_1,w_2)) \wedge$
\item[] $\forall w_1\forall w_2\forall w_3((w_1,w_2)\in f\wedge (w_1,w_3)\in f\rightarrow w_2=w_3)\wedge$
\item[] $\forall w_1(\eta_{\mathsf{List}(A)}[w_1/u]\rightarrow \exists w_2((w_1,w_2)\in f))\wedge$
\item[] $\exists v(\delta_b[v/u]\wedge (0,v)\in f)\wedge$
\item[] $\forall w_1\forall w_2\forall w_3\forall v((w_1,w_3)\in f\wedge \eta_A[w_2/u]\wedge \delta_c[w_1/x,w_2/y,w_3/z,v/u]\rightarrow $
\item[] $(w_1\cup \{(\ell(w_1),w_2)\},v)\in f )\wedge$
\item[] $\exists w'(\delta_a[w'/u]\wedge (w',u)\in f)\Big)$;
\item $\eta_{A/(x,y)\varphi}:\equiv\exists w(\eta_A[w/u]\wedge \forall v(v\in u\leftrightarrow \eta_A[v/u]\wedge \widehat{\varphi}[w/x,v/y]))$,
\item[] $\delta_{[a]_{A,(x,y)\varphi}}:\equiv\exists w(\delta_a[w/u]\wedge\forall v(v\in u\leftrightarrow \eta_A[v/u]\wedge \widehat{\varphi}[w/x,v/y]))$ and 
\item[] $\delta_{\mathsf{El}_Q(a,(x)b)}:\equiv\exists v(\delta_a[v/u]\wedge \exists w(w\in v)\wedge \forall w(w\in v\rightarrow \delta_b[w/x]))$;
\item $\eta_{\mathcal{P}(1)}:\equiv u\subseteq \{0\}$ and $\delta_{[\varphi]}:\equiv\forall v(v\in u \leftrightarrow v=0\wedge \widehat{\varphi})$;
\item $\eta_{A\rightarrow \mathcal{P}(1)}:\equiv \eta_{(\Pi x\in A)\mathcal{P}(1)}$ where $x$ is fresh;
\item $\eta_{\mathbf{V}}:\equiv u=u$,
\item[] $\delta_{\lceil A\rceil}:\equiv\forall v(v\in u\leftrightarrow \eta_A[v/u])$,
\item[] $\delta_{\emptyset}:\equiv u=\emptyset$,
\item[] $\delta_{\{a,b\}}:\equiv\exists v\exists w(\delta_a[v/u]\wedge \delta_b[w/u]\wedge u=\{v,w\})$,
\item[] $\delta_{\bigcup a}:\equiv\exists v(\delta_a[v/u]\wedge u=\bigcup v)$,
\item[] $\delta_{\mathcal{P}(a)}:\equiv\exists v(\delta_a[v/u]\wedge \forall w(w\in u \leftrightarrow w\subseteq v))$,
\item[] $\delta_{\{x\,\varepsilon\,a|\,\varphi\}}:\equiv\exists v(\delta_{a}[v/u]\wedge \forall x(x\in u\leftrightarrow x\in v\wedge \widehat{\varphi}))$ and
\item[] $\delta_{\omega}:\equiv u=\omega$;
\item $\eta_{\{x|\,\varphi\}}:\equiv \widehat{\varphi}[u/x]$.
\end{enumerate}
If $\Gamma$ is a pre-context of $\mathbf{emTT}_{\mathcal{T}}$, we define the formula $\widehat{\Gamma}$ of $\mathcal{T}$ as follows:
\begin{enumerate}
    \item $\widehat{[\;]}:\equiv \top$
    \item $\widehat{[\Gamma,x\in A]}:\equiv \widehat{\Gamma}\wedge \eta_A[x/u]$
\end{enumerate}
\end{definition}


We now show that the composition of the two translations in one order results in an equivalence.

\begin{prop}\label{oneside}
Let $a$ be a term of $\mathcal{T}$ and $\psi$ a formula of $\mathcal{T}$. Then:
\begin{enumerate}
    \item $\mathcal{T}\vdash u=a\leftrightarrow \delta_{\widetilde{a}}$; 
\item $\mathcal{T}\vdash \psi\leftrightarrow \widehat{\widetilde{\psi}}$
\end{enumerate}
\end{prop}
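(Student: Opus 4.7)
The plan is to prove both parts simultaneously by induction on the structure of the term $a$ and of the formula $\psi$. The two claims must be interlocked because the translation $\widetilde{(\cdot)}$ of a set-theoretic term of the form $\{x\in a\mid\varphi\}$ involves translating the subformula $\varphi$, and conversely the translation of atomic formulas $a=b$ and $a\in b$ depends on the translation of their constituent terms. Throughout, I would freely use that $\eta_{\mathbf{V}}\equiv u=u$ is trivially true in $\mathcal{T}$, which collapses the bounded quantifiers introduced by the first translation back to the unbounded ones of set theory.

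For part 1, the variable, $\emptyset$, and $\omega$ cases are immediate from the definitions of $\delta_x$, $\delta_{\emptyset}$, $\delta_{\omega}$. For $a\equiv\{b,c\}$, $a\equiv\bigcup b$, $a\equiv\mathcal{P}(b)$, I would unfold $\delta_{\widetilde{a}}$, apply the inductive hypothesis to replace each $\delta_{\widetilde{b}}[v/u]$ by $v=b$ (and similarly for $c$), and then use the axiom of pairing (resp.\ union, power-set) together with extensionality to collapse the existentials to $u=\{b,c\}$ (resp.\ $u=\bigcup b$, $u=\mathcal{P}(b)$). The case $a\equiv\{x\in b\mid\varphi\}$ is slightly more delicate: after unfolding $\delta_{\{x\,\varepsilon\,\widetilde{b}\mid\widetilde{\varphi}\}}$ we obtain $\exists v(\delta_{\widetilde{b}}[v/u]\wedge \forall x(x\in u\leftrightarrow x\in v\wedge \widehat{\widetilde{\varphi}}))$, and here I would invoke the inductive hypothesis for part 1 (applied to $b$) and for part 2 (applied to $\varphi$) to rewrite this as $\forall x(x\in u\leftrightarrow x\in b\wedge\varphi)$, which characterises $u=\{x\in b\mid\varphi\}$ by extensionality and separation.

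For part 2, the atomic cases $\psi\equiv a\in b$ and $\psi\equiv a=b$ reduce, after unfolding $\widehat{\cdot}$, to formulas of the shape $\exists u\exists v(\delta_{\widetilde{a}}\wedge\delta_{\widetilde{b}}[v/u]\wedge u\in v)$ and $\exists u(\delta_{\widetilde{a}}\wedge\delta_{\widetilde{b}}\wedge u=u)$, and the inductive hypothesis for part 1 rewrites them as $\exists u\exists v(u=a\wedge v=b\wedge u\in v)$ and $\exists u(u=a\wedge u=b)$, both of which are equivalent to the original atomic formulas. The propositional connectives are handled directly by the inductive hypothesis. For the quantifier cases, $\widetilde{\exists x\,\varphi}\equiv(\exists x\in\mathbf{V})\widetilde{\varphi}$ gives $\widehat{\widetilde{\exists x\,\varphi}}\equiv\exists x(\eta_{\mathbf{V}}[x/u]\wedge\widehat{\widetilde{\varphi}})\equiv\exists x(x=x\wedge\widehat{\widetilde{\varphi}})$, which is equivalent to $\exists x\,\varphi$ by the inductive hypothesis; the universal case is analogous.

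I do not expect a single hard step; the main care is bookkeeping. The one subtlety to watch is variable hygiene: the definition reserves $u$ (and auxiliaries $v,w,\ldots$) as fresh placeholders, so when doing the substitutions $\delta_{\widetilde{b}}[v/u]$ above I must ensure the fresh variables inside $\delta_{\widetilde{b}}$ do not clash with those introduced one level up. The paper's restriction to formulas in which no variable appears both free and bound, together with standard $\alpha$-renaming of the fresh placeholders, makes this purely administrative, so the whole proposition goes through by a straightforward simultaneous induction.
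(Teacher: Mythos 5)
Your proposal is correct and follows essentially the same route as the paper: a simultaneous induction on terms and formulas, with the only non-trivial cases being the separation term $\{x\in a\mid\varphi\}$ (which needs both inductive hypotheses) and the atomic formulas $a=b$, $a\in b$, all handled exactly as in the paper's proof. Your additional remarks on the trivial cases and on variable hygiene are consistent with, and slightly more explicit than, what the paper records.
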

\begin{proof}
1.\ and 2.\ are proven simultaneously by induction on the complexity of terms and formulas. Let us consider the only non-trivial cases: terms of the form $\{x\in a|\,\psi\}$ and atomic formulas $a=b$ and $a\in b$:
\begin{enumerate}
    \item $\delta_{\widetilde{\{x\in a|\,\psi\}}}$ is by definition $\exists v(\delta_{\widetilde{a}}[v/u]\wedge \forall x(x\in u\leftrightarrow x\in v\wedge\widehat{\widetilde{\psi}} ))$. Using the inductive hypothesis 1.\ on the term $a$ we have that this is equivalent in $\mathcal{T}$ to $\forall x(x\in u\leftrightarrow x\in a\wedge \widehat{\widetilde{\psi}} ))$; using inductive hypothesis 2.\ on the formula $\psi$ this is equivalent in $\mathcal{T}$ to $\forall x(x\in u\leftrightarrow x\in a\wedge \psi ))$ which is equivalent to $u=\{x\in a|\,\psi\}$;
    \item $\widehat{\widetilde{a=b}}$ is equivalent by definition to $\exists u(\delta_{\widetilde{a}}\wedge \delta_{\widetilde{b}})$ which is equivalent by inductive hypothesis 1.\ on $a$ and $b$ to $\exists u\,(u=a\,\wedge\, u=b)$ that is equivalent to $a=b$;
    \item the proof of the equivalence between $\widehat{\widetilde{a\in b}}$ and $a\in b$ in $\mathcal{T}$ is analogous to the previous one.
\end{enumerate}
\end{proof}
\noindent The next step consists in proving that formulas of the form $\delta_t$ hold at most for one $u$ in $\mathcal{T}$. 

\begin{lemma} Let $t$ be a pre-term of $\mathbf{emTT}_{\mathcal{T}}$. Then
$$\mathcal{T}\vdash \delta_{t}\wedge \delta_{t}[v/u]\rightarrow u=v$$
\end{lemma}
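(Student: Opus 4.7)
The plan is to argue by structural induction on the pre-term $t$. Almost every clause defining $\delta$ fits one of two uniform templates, and I would handle the templates once and apply the result case by case. The \emph{equational template} covers clauses of the shape $\exists\vec w\,\bigl(\bigwedge_i \delta_{t_i}[w_i/u] \wedge u = E(\vec w)\bigr)$ for some set-theoretic expression $E$: two copies with placeholders $u$ and $v$ yield witnesses $\vec w$ and $\vec w'$, the inductive hypotheses on the sub-pre-terms $t_i$ force $w_i = w'_i$, and then $u = E(\vec w) = E(\vec w') = v$ follows. This dispatches the variable case and the constants ($\emptyset$, $\omega$, $\mathsf{true}$, $\star$, $\epsilon$, $\mathsf{emp}_0(a)$), the simple constructors ($\{a,b\}$, $\bigcup a$, $\langle a,b\rangle$, $\mathsf{cons}$, $\mathsf{inl}$, $\mathsf{inr}$, $\mathsf{Ap}$), and — with a small case split using the provable inequality $0 \neq 1$ in $\mathcal{T}$ — also $\mathsf{El}_{\mathsf{N}_1}$, $\mathsf{El}_\Sigma$, $\mathsf{El}_+$, and $\mathsf{El}_Q$.

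The \emph{extensional template} covers clauses of the form $\exists\vec w\,\bigl(\bigwedge_i \delta_{t_i}[w_i/u] \wedge \forall z\,(z \in u \leftrightarrow \chi(z,\vec w))\bigr)$ with $u$ no longer free in $\chi$. Once the inductive hypotheses equate the witnesses across both copies, the two candidates $u$ and $v$ turn out to have the same elements, and the extensionality axiom of $\mathcal{T}$ then concludes $u = v$. This takes care of $\mathcal{P}(a)$, $\{x\,\varepsilon\,a|\,\varphi\}$, $\lceil A\rceil$, $[\varphi]$, $\lambda x^A.b$, and $[a]_{A,(x,y)\varphi}$.

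The main obstacle is the list recursor $\mathsf{El}^A_{\mathsf{List}}(a,b,(x,y,z)c)$, whose $\delta$-formula postulates a function $f$ encoding the whole primitive recursion and defines $u$ as the value of $f$ at a list $w'$ witnessed by $\delta_a$. Two copies give functions $f_1, f_2$ and arguments $w'_1, w'_2$; the inductive hypothesis on $a$ equates $w'_1 = w'_2$, but to reach $u = v$ one still needs $f_1$ and $f_2$ to agree at this common argument. I would prove, by $\in$-induction — available in all three theories via the set-induction axiom — the auxiliary statement that $f_1(w) = f_2(w)$ whenever $\eta_{\mathsf{List}(A)}[w/u]$ holds: the empty-list base case is handled by the inductive hypothesis on $b$, and the extension step $w \cup \{(\ell(w), w_2)\}$ by the inductive hypothesis on $c$ applied once the previous values have been identified. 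Instantiating at $w'$ then gives $u = v$, closing the induction.
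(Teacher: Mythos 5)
Your overall strategy --- a structural induction on $t$ organized around an ``equational'' template, an ``extensional'' template discharged by the extensionality axiom of $\mathcal{T}$, and a separate sub-induction for the list eliminator --- is exactly the induction the paper has in mind (its own proof is the single sentence ``straightforward proof by induction on complexity of $t$''), and both templates are handled correctly. The only blemish there is cosmetic: $\mathsf{El}_\Sigma$ and $\mathsf{El}_Q$ need no $0\neq 1$ case split (only $\mathsf{El}_+$ does), and $\mathsf{El}_{\mathsf{N}_1}(a,b)$ is handled by the inductive hypothesis on $b$ alone rather than by either template.

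The list-eliminator case, however, contains a genuine gap. In the successor step of your auxiliary induction you need not only the \emph{uniqueness} of the value of $\delta_c$ at the arguments $(w,w_2,f_1(w))$ --- which is all the main inductive hypothesis provides --- but also its \emph{existence}: the conjunct of $\delta_{\mathsf{El}^A_{\mathsf{List}}}$ that propagates the recursion is an implication whose antecedent contains $\delta_c[\dots,v/u]$, so if no such $v$ exists that conjunct is vacuous at the node $w\cup\{(\ell(w),w_2)\}$, and totality plus single-valuedness alone leave $f_1$ and $f_2$ free to disagree there. Existence of a $\delta_c$-value is not available for an arbitrary pre-term $c$: for instance $\delta_{\mathsf{El}_{\mathbf{V}/(x,y)\bot}(\emptyset,(x)\emptyset)}$ contains the conjunct $\exists w(w\,\varepsilon\ldots)$ applied to the empty set and is provably unsatisfiable. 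Taking such a $c$ as step term, $A=\mathsf{N}_1$, $b=\star$ and $a=\mathsf{cons}(\epsilon,\star)$, the witnessing $f$ may be \emph{any} total single-valued function on the (provably set-sized) class of lists over $\mathsf{N}_1$ sending $\emptyset$ to $0$, and two such functions exist that differ at the one-element list denoted by $a$; so $\delta_t$ is satisfied by more than one $u$ and the statement itself fails for this pre-term. The gap is therefore not repairable inside your argument: one must either restrict the lemma to derivable judgements $a\in A\,[\Gamma]$ and prove uniqueness simultaneously with the existence statement $\widehat{\Gamma}\rightarrow\exists u\,\delta_a$ of the final theorem (so that totality of $\delta_c$ is available at each stage), or reformulate $\delta_{\mathsf{El}^A_{\mathsf{List}}}$ so that $f$ is pinned down, e.g.\ as the least relation generated by the recursion. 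A separate, minor point: your auxiliary statement is not literally an instance of $\in$-induction on $w$, since the predecessor list of $w\cup\{(\ell(w),w_2)\}$ is a subset rather than an element of it; the induction should run on the length $\ell(w)\in\omega$ (or use the induction principle packaged in the $\omega$-rules).
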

\begin{proof}
This is a straightforward proof by induction on complexity of $t$.
\end{proof}
\begin{lemma}[Substitution Lemma] Let $t$ and $a$ be pre-terms of $\mathbf{emTT}_{\mathcal{T}}$, $A$ a pre-collection of $\mathbf{emTT}_{\mathcal{T}}$ and $\varphi$ a pre-proposition of $\mathbf{emTT}_{\mathcal{T}}$. Then:
\begin{enumerate}

   
    \item $\mathcal{T}\vdash \exists u\delta_{t}\wedge \delta_{a[t/x]}\leftrightarrow \exists v(\delta_{t}[v/u]\wedge \delta_{a}[v/x])$
    \item $\mathcal{T}\vdash \exists u\delta_{t}\wedge \eta_{A[t/x]}\leftrightarrow \exists v(\delta_{t}[v/u]\wedge \eta_{A}[v/x])$
    \item $\mathcal{T}\vdash \exists u \delta_t\wedge \widehat{\varphi[t/x]}\leftrightarrow \exists v(\delta_{t}[v/u]\wedge \widehat{\varphi}[v/x])$
\end{enumerate}
where $v$ is always assumed to be a fresh variable.
\end{lemma}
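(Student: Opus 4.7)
I prove all three statements simultaneously by structural induction on the pre-term $a$, the pre-collection $A$, and the pre-proposition $\varphi$. The guiding intuition is that $\delta_{(-)}$, $\eta_{(-)}$, and $\widehat{(-)}$ are all defined by a single compositional recursion over the grammar, so the syntactic substitution $[t/x]$ commutes with them up to an existential quantifier that witnesses the (unique, by the preceding lemma) value of $t$. The only genuinely interesting base case is $a \equiv x$: here $a[t/x] \equiv t$, the left-hand side of (1) becomes $\exists u\,\delta_t \wedge \delta_t$, while the right-hand side becomes $\exists v(\delta_t[v/u] \wedge u = v)$; both are easily seen to be equivalent to $\delta_t$, the first by the uniqueness lemma, the second by instantiating $v := u$. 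For any other atomic pre-term (a variable $y \ne x$, or a constant such as $\emptyset$, $\omega$, $\star$, $\epsilon$, $\mathsf{true}$), $x$ does not occur in $\delta_a$ at all, so both sides immediately reduce to $\exists u\,\delta_t \wedge \delta_a$ up to renaming of the bound $u$.

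\textbf{Inductive step.} Every remaining clause in the definition of $\delta$, $\eta$, and $\widehat{\cdot}$ builds a first-order formula uniformly out of the translations of its immediate sub-components. The routine is then always the same: distribute the substitution $[t/x]$ over that formula, apply the appropriate inductive hypothesis to each occurrence of $\delta_{a'[t/x]}$, $\eta_{A'[t/x]}$, or $\widehat{\varphi'[t/x]}$, and finally merge the several existentially-quantified copies of $\delta_t[v/u]$ that this introduces into a single $\exists v$, using the uniqueness lemma to identify their witnesses. For constructors that bind a variable ($\Sigma$, $\Pi$, $\lambda$, quotient, $\mathsf{List}$-recursion, comprehension, $\forall$, $\exists$) I adopt the standard freshness convention so that the bound variable differs from $x$ and from every free variable of $t$; then substitution commutes with the binder and the inductive hypothesis applies unchanged. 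The mutual nature of the induction is essential in, e.g., the $(\forall x \in A)\varphi$ case, where one invokes (2) on $A$ and (3) on $\varphi$ at the same time.

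\textbf{Main obstacle.} The argument is long but essentially mechanical. The most bookkeeping-heavy cases are the application term $\mathsf{Ap}(a,b)$ and the list recursor $\mathsf{El}^A_{\mathsf{List}}(a,b,(x,y,z)c)$, whose $\delta$-clauses contain several nested invocations of $\delta_{(-)}$, $\eta_{\mathsf{List}(A)}$, and $\eta_A$; however, no new idea is required beyond the inductive hypothesis, distributivity of existentials over conjunctions, and the uniqueness lemma. The only real pitfall is keeping track of the fresh variables introduced under the many binders, so as to avoid accidental capture when $t$ happens to share a name with one of them.
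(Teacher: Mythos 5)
Your proof is correct and follows essentially the same route as the paper's: a simultaneous structural induction on the pre-syntax of $\mathbf{emTT}_{\mathcal{T}}$, with the preceding uniqueness lemma used to identify the existential witnesses for $\delta_t$. The only (cosmetic) difference is that the paper first rewrites each statement in the conditional form $\delta_t[v/u]\rightarrow(\delta_{a[t/x]}\leftrightarrow\delta_a[v/x])$ (and likewise for $\eta$ and $\widehat{\cdot}$), which carries the hypothesis along and avoids the explicit merging of existential quantifiers that you perform at each inductive step.
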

\begin{proof}
These are proven simultaneously by induction on the complexity of the pre-syntax of $\mathbf{emTT}_{\mathcal{T}}$, once one notice that the statements are equivalent to the following, respectively:
\begin{enumerate}
    \item $\mathcal{T}\vdash\delta_t[v/u]\rightarrow (\delta_{a[t/x]}\leftrightarrow \delta_a[v/x])$
    \item $\mathcal{T}\vdash\delta_t[v/u]\rightarrow (\eta_{A[t/x]}\leftrightarrow \eta_A[v/x])$
    \item $\mathcal{T}\vdash\delta_t[v/u]\rightarrow (\widehat{\varphi[t/x]}\leftrightarrow \widehat{\varphi}[v/x])$
\end{enumerate}
\end{proof}

The next proposition is the counterpart of Proposition \ref{oneside}.
\begin{prop}\label{otherside}
Let $A$ be a pre-collection of $\mathbf{emTT}_{\mathcal{T}}$, $a$ a pre-term of $\mathbf{emTT}_{\mathcal{T}}$ and $\varphi$ a pre-proposition of $\mathbf{emTT}_{\mathcal{T}}$. Then:
\begin{enumerate}
    \item if $\mathbf{emTT}_{\mathcal{T}}\vdash A\,col\,[\Gamma]$, then $$\mathbf{emTT}_{\mathcal{T}}\vdash A=\{z|\,\widetilde{\eta_A[z/u]}\}\,col\,[\Gamma]$$
     \item if $\mathbf{emTT}_{\mathcal{T}}\vdash a\in A\,[\Gamma]$, then $$\mathbf{emTT}_{\mathcal{T}}\vdash \mathsf{true}\in(\forall z\in \mathbf{V})\,(\widetilde{\delta_a[z/u]}\leftrightarrow z=_{\mathbf{V}}a)\,[\Gamma]$$
     \item if $\mathbf{emTT}_{\mathcal{T}}\vdash \varphi\,prop\,[\Gamma]$, then $$\mathbf{emTT}_{\mathcal{T}}\vdash \mathsf{true}\in\varphi\leftrightarrow \widetilde{\widehat{\varphi}}\,[\Gamma]$$
     \end{enumerate}
     where $z$ is always a fresh variable.

\end{prop}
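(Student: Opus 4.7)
The plan is to prove the three statements simultaneously by induction on the structure of the pre-syntax object under consideration. The toolbox consists of four ingredients: (i) the derived characterizations at the end of Step 4, which put every pre-collection constructor $A$ into the form $\{z\mid\Phi_A\}$ for an explicit $\Phi_A$ matching, up to the inductive hypothesis, the translation $\widetilde{\eta_A[z/u]}$; (ii) the comprehension rule $\mathsf{true}\in\varphi[a/x]\leftrightarrow a\,\varepsilon\,\{x\mid\varphi\}$ together with the extensional equality rule for collections, both from Step 1; (iii) the bounded-quantifier rules from Step 1, which interchange $(\exists x\in A)\psi$ with $(\exists x\in\mathbf{V})(x\,\varepsilon\,A\wedge\psi)$ and dually for $\forall$; and (iv) the preceding Substitution Lemma and uniqueness lemma for $\delta_t$, together with the routine observation that $\widetilde{\cdot}$ commutes with object-level substitution.

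For part (1), I would proceed by cases on the outermost constructor of $A$. Unfolding $\widetilde{\eta_A[z/u]}$ case by case yields a pre-proposition that, after applying IH (1) to sub-collections, IH (3) to sub-propositions, and using the bounded-quantifier rules to rewrite each $(\exists v\in\mathbf{V})(\widetilde{\eta_B[v/u]}\wedge\psi)$ as $(\exists v\in B)\psi$, matches precisely the body of the derived rule for that constructor; the extensional equality rule for collections then upgrades the resulting biconditional to a definitional equality. For instance, in the $(\Sigma x\in A)B$ case the derived rule gives $\{z\mid(\exists x\in A)(\exists y\in B)(z=_{\mathbf{V}}(x,y))\}$, and the translation unfolds to $(\exists v\in\mathbf{V})(\exists w\in\mathbf{V})(\widetilde{\eta_A[v/u]}\wedge\widetilde{\eta_B[v/x,w/u]}\wedge z=_{\mathbf{V}}(v,w))$, which the IHs and bounded-quantifier rules convert into exactly the right shape. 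The case $\{x\mid\varphi\}$ uses IH (3) on $\varphi$ under the context extension $x\in\mathbf{V}$, and the case $\mathbf{V}$ uses reflexivity of $=_{\mathbf{V}}$ together with the rule transporting $a\in A$ into $\mathsf{true}\in a\,\varepsilon\,A$.

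For part (3), the atomic cases $a\,\varepsilon\,b$, $a\,\varepsilon\,A$ and $a=_A b$ are handled by unfolding $\widehat{\cdot}$ into an existential over $\mathbf{V}$ and collapsing it via IH (2) on the pre-terms (which pins the existential witness to the intended term) and IH (1) on $A$; the equality case additionally invokes the Step-1 rule that equality in $\mathbf{V}$ and in $A$ coincide when both sides live in $A$. The connective cases are immediate, and the quantifier cases reduce through the bounded-quantifier rules combined with IH (1) on the bounding collection and IH (3) on the body. For part (2), I would separately treat the canonical introduction terms and the elimination terms: for each introduction form, the Step-4 interpretation rule provides an explicit set-theoretic description of the canonical element, and this is exactly what unfolding $\widetilde{\delta_a[z/u]}$ produces after applying IH (2) to sub-terms and the Substitution Lemma; for elimination terms and $\mathsf{Ap}$ the computation rules of $\mathbf{emTT}$ reduce the term to a canonical form already covered, and the uniqueness lemma for $\delta_t$ closes the argument.

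The main obstacle will be part (2) for the more involved elimination terms, in particular $\mathsf{Ap}(a,b)$, whose $\delta$-translation features the set-theoretic lookup $u=p_2(\bigcup\{z\in v\mid p_1(z)=w\})$, and $\mathsf{El}^A_{\mathsf{List}}$, whose $\delta$ encodes primitive recursion as an existential over a functional graph. Handling these cases requires unfolding the Step-4 interpretation of $\lambda x^A.b$ as a set of pairs and then tracking how the $\beta$-computation rule of $\mathbf{emTT}$ extracts the intended value, together with verifying, in the list case, the existence and uniqueness of the recursor graph using the closure properties already invoked to justify the Step-4 interpretation. Once the canonical cases are established, the elimination cases follow by the $\beta$- and $\eta$-conversion rules of $\mathbf{emTT}$ combined with the uniqueness of $\delta$-witnesses.
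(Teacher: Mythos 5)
The paper states Proposition~\ref{otherside} without giving any proof at all, so there is nothing to compare against line by line; your proposal supplies exactly the argument the surrounding text implicitly intends. The simultaneous induction, driven by the derived characterizations of Step~4, the comprehension/extensionality/bounded-quantifier rules of Step~1, and the Substitution Lemma, is the right decomposition, and your case checks (e.g.\ $(\Sigma x\in A)B$, the atomic propositions, the collapse of existentials via item~(2)) are correct. Two points deserve more care than your sketch gives them. First, the induction should officially be on derivations (or on pre-syntax together with an inversion lemma for $\mathbf{emTT}_{\mathcal{T}}$ judgements), since the hypotheses are derivability statements and the sub-judgements you apply the IH to must themselves be seen to be derivable; this is routine but not free, especially in the presence of the conversion rules. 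Second, for the elimination terms in item~(2) the computation rules alone do not suffice when the major premise is not canonical: you also need the internally derivable case-analysis/induction principles (every element of $A+B$ is an $\mathsf{inl}$ or an $\mathsf{inr}$, every list is reached from $\epsilon$ by $\mathsf{cons}$, every $f\in(\Pi x\in A)B$ equals $\lambda x^A.\mathsf{Ap}(f,x)$), obtained from the $\mathbf{emTT}$ eliminators into propositions together with equality reflection, before the uniqueness lemma for $\delta$ can close those cases. You correctly flag $\mathsf{Ap}$ and $\mathsf{El}^A_{\mathsf{List}}$ as the delicate spots; spelling out the list case in full (existence and uniqueness of the recursor graph inside $\mathbf{emTT}_{\mathcal{T}}$, not just in $\mathcal{T}$) is the one place where the write-up would need genuine additional work, but the strategy is sound.
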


\section{The main result}
The first theorem says that $\mathbf{emTT}_{\mathcal{T}}$ can be seen as an extension of $\mathcal{T}$.
\begin{theorem}\label{emttzf}
Let $\psi$ be a formula of $\mathcal{T}$ whose free variables are among $x_1,...,x_n$. 
If $\mathcal{T}\vdash \psi$, then $$\mathbf{emTT}_{\mathcal{T}}\vdash \mathsf{true}\in\widetilde{\psi}\,[x_1\in \mathbf{V},...,x_n\in \mathbf{V}]$$
\end{theorem}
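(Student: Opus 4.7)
The plan is to argue by induction on the derivation of $\mathcal{T}\vdash \psi$. As a preliminary step, I would prove a \emph{well-typedness lemma} by structural induction on the syntax of $\mathcal{T}$: for every term $a$ of $\mathcal{T}$ whose free variables are among $x_1,\dots,x_n$,
$$\mathbf{emTT}_{\mathcal{T}}\vdash \widetilde{a}\in \mathbf{V}\,[x_1\in\mathbf{V},\dots,x_n\in\mathbf{V}],$$
and for every formula $\varphi$ of $\mathcal{T}$,
$$\mathbf{emTT}_{\mathcal{T}}\vdash \widetilde{\varphi}\,prop\,[x_1\in\mathbf{V},\dots,x_n\in\mathbf{V}].$$
The inductive clauses follow directly from the rules of Step~3, which equip $\mathbf{V}$ with constructors matching $\emptyset,\omega,\{-,-\},\bigcup(-),\mathcal{P}(-),\{x\,\varepsilon\,(-)\mid -\}$, together with the rules making $a=_{\mathbf{V}}b$ and $a\,\varepsilon\,b$ (small) propositions over $\mathbf{V}$. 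For $\mathcal{T}=\mathbf{CZF}$ this should be refined to record that $\widetilde{\varphi}$ is in fact a $prop_s$ whenever $\varphi$ is a $\Delta_0$-formula. A companion syntactic observation, also established by straightforward induction, is that the translation commutes strictly with substitution: $\widetilde{a[b/x]}\equiv\widetilde{a}[\widetilde{b}/x]$ and $\widetilde{\varphi[b/x]}\equiv\widetilde{\varphi}[\widetilde{b}/x]$.

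The cases corresponding to the intuitionistic first-order rules of $\mathcal{T}$ reduce to the observation that the translation is a strict homomorphism on connectives and that an unrestricted quantifier of $\mathcal{T}$ becomes a quantifier bounded to $\mathbf{V}$ in $\mathbf{emTT}_{\mathcal{T}}$, which matches the typing of free variables in the ambient context. All the natural-deduction rules for the connectives and for quantifiers bounded to $\mathbf{V}$ are derivable in $\mathbf{emTT}_{\mathcal{T}}$, so these cases go through mechanically. For $\mathcal{T}=\mathbf{ZF}$, the excluded middle case is handled directly by the corresponding rule of $\mathbf{emTT}_{\mathbf{ZF}}$.

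The bulk of the work is to verify, for each specific axiom of $\mathcal{T}$, that its translation is provable in $\mathbf{emTT}_{\mathcal{T}}$. For the axioms common to $\mathbf{CZF}$, $\mathbf{IZF}$, $\mathbf{ZF}$ — extensionality, empty set, pairing, union, infinity, and $\in$-induction — the rules of Step~3 were tailored to embody them essentially verbatim once the unrestricted quantifiers are translated as quantifiers over $\mathbf{V}$, modulo unfolding of abbreviations (for example rewriting $x\cup\{x\}$ as $\bigcup\{x,\{x,x\}\}$ to match $\mathsf{Trans}(y)$). For $\mathbf{IZF}/\mathbf{ZF}$, the axioms of power set, separation, and replacement are matched by the dedicated rules, using the well-typedness lemma and the commutation of translation with substitution in order to instantiate the separation/replacement rules with $\widetilde{\varphi}$. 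For $\mathbf{CZF}$, restricted separation is handled via the $prop_s$-version of the separation rule together with the refined well-typedness lemma for $\Delta_0$-formulas, while strong and subset collection are matched by the $\mathsf{SCol}$ and $\mathsf{SubCol}$ rules.

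The main obstacle is essentially bookkeeping: one must align the shapes of the Step~3 rules with the axiom schemes as written in the set theory, keeping careful track of parameters and of which variables are the ``working'' ones. In particular, the refinement of the preliminary lemma needed for $\mathbf{CZF}$ — that $\widetilde{\varphi}\,prop_s$ whenever $\varphi$ is $\Delta_0$ — must be threaded through the mutual induction on terms and formulas, because the clause for $\Delta_0$-separation terms $\{x\in a\mid\varphi\}$ feeds back into the term-level clause and is required to apply restricted separation in $\mathbf{emTT}_{\mathbf{CZF}}$.
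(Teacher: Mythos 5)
Your overall strategy --- induction on derivations in $\mathcal{T}$, supported by a well-typedness lemma for the translation and a substitution lemma, with the logical rules absorbed by the bounded-quantifier rules of Step 1 and each axiom matched against its dedicated Step 3 rule --- is exactly the route the paper intends; its own proof is a one-line appeal to those rules, and for $\mathbf{IZF}$ and $\mathbf{ZF}$ your outline does go through essentially as described.

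The one place where ``follows directly from the rules'' overclaims is the $\mathbf{CZF}$ refinement that you yourself flag: that $\widetilde{\varphi}$ is a $prop_s$ whenever $\varphi$ is a $\Delta_0$-formula. In $\mathbf{emTT}$ small propositions are closed under quantification with respect to \emph{sets}, whereas the translation turns a bounded quantifier $\forall x(x\in a\rightarrow \varphi)$ into $(\forall x\in \mathbf{V})(x\,\varepsilon\,\widetilde{a}\rightarrow \widetilde{\varphi})$, a quantification over the collection $\mathbf{V}$, which is not a set (and cannot be one, by $\varepsilon$-induction). So the formation rules do not directly yield the judgement $\widetilde{\varphi}\,prop_s$ in this case; yet that judgement is precisely what the restricted-separation rule of $\mathbf{emTT}_{\mathbf{CZF}}$ demands in order to form $\{x\,\varepsilon\,\widetilde{a}|\,\widetilde{\varphi}\}\in \mathbf{V}$, and such terms already occur in the $\Delta_0$-separation axiom of $\mathbf{CZF}$, so the case cannot be sidestepped. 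To close it you need an extra derivation: for instance, show that $(\forall x\in \mathbf{V})(x\,\varepsilon\,\widetilde{a}\rightarrow \widetilde{\varphi})$ is equal as a proposition to $(\forall x\in A)\widetilde{\varphi}$ for the set $A:=\{x|\,x\,\varepsilon\,\widetilde{a}\}$ (using the Step 1 equivalence together with the fact that equivalent propositions are equal), and then justify transporting the judgement $prop_s$ along this equality. Note that the collapse rule of Step 2 only concludes $A=B\,prop_s$ when \emph{both} sides are already known to be small, so this transport is not among the listed rules; it must be derived separately (in the spirit of the paper's remark that the judgement $set$ can be transported along extensional equality via the names $\lceil A\rceil$, for which no analogous device is provided for $prop_s$), or else your well-typedness lemma must be restated only up to equality of propositions. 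Apart from this point, your plan is sound.
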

\begin{proof}
This is essentially an immediate consequence of the rules in Step 3 in Section \ref{setrules} and the rules for propositions in $\mathbf{emTT}$.
\end{proof}
\noindent The next theorem shows how the judgements of $\mathbf{emTT}_{\mathcal{T}}$ are interpreted in $\mathcal{T}$. However, before proceeding we need to introduce the concept of $\mathsf{K}_{0}$-formula relative to a formula $\gamma$ of $\mathcal{T}$. $\mathsf{K}_0$-formulas are nothing but $\Delta_0$ formulas in which some variables are substituted by definable elements which can possibly lack a representation as terms.
The class of formulas $\widetilde{\mathsf{K}_{0}}[\gamma]$ is the smallest one respecting the following clauses:
 \begin{enumerate}
     \item $\bot$, $x=y$ and $x\in y$ are in $\widetilde{\mathsf{K}_{0}}[\gamma]$ for every pair of variables $x,y$;
     \item if $\varphi$ and $\psi$ are in $\widetilde{\mathsf{K}_{0}}[\gamma]$, then $\varphi\wedge \psi$, $\varphi\vee \psi$ and $\varphi\rightarrow \psi$ are in $\widetilde{\mathsf{K}_{0}}[\gamma]$;
     \item if $\varphi$ is a formula in $\widetilde{\mathsf{K}_{0}}[\gamma]$, $y$ is a variable, $z$ is a fresh variable and $\delta$ is a formula such that $\mathcal{T}\vdash \gamma\rightarrow \exists ! z\,\delta$, then $\exists z(\delta\wedge \exists y\in z\,\varphi)$, $\exists z(\delta\wedge \forall y\in z\,\varphi)$ and $\exists z(\delta\wedge \varphi)$ are in $\widetilde{\mathsf{K}_{0}}[\gamma]$.
 \end{enumerate}
 The class $\mathsf{K}_{0}[\gamma]$ contains those formulas $\varphi$ in $\widetilde{\mathsf{K}_{0}}[\gamma]$ such that $\mathbf{free}(\varphi)\subseteq \mathbf{free}(\gamma)$.
 
Moreover, one can associate to every formula $\varphi$ in $\mathsf{K}_{0}[\gamma]$ a $\Delta_0$-formula $\sigma(\varphi)$ of $\mathcal{T}$ as follows:
  \begin{enumerate}
     \item $\sigma(\bot):\equiv\bot$, $\sigma(x=y):\equiv x=y$ and $\sigma(x\in y):\equiv x\in y$;
     \item if $\varphi$ and $\psi$ are in $\mathsf{K}_{0}[\gamma]$, then $\sigma(\varphi\wedge \psi):\equiv\sigma(\varphi)\wedge \sigma(\psi)$, $\varphi\vee \psi:\equiv\sigma(\varphi)\vee \sigma(\psi)$ and $\sigma(\varphi\rightarrow \psi):\equiv\sigma(\varphi)\rightarrow \sigma(\psi)$;
     \item if $\varphi$ is a formula in $\mathsf{K}_{0}[\gamma]$, $y$ is a variable and $z$ is a fresh variable and $\delta$ is a formula such that $\mathcal{T}\vdash \gamma\rightarrow \exists ! z\,\delta$, then $\sigma(\exists z(\delta\wedge \exists y\in z\,\varphi)):\equiv \exists y\in z\,\sigma(\varphi)$, $\sigma(\exists z(\delta\wedge \forall y\in z\,\varphi)):=\forall y\in z\,\sigma(\varphi)$ and $\sigma(\exists z(\delta\wedge \varphi)):\equiv \sigma(\varphi)$.
 \end{enumerate}
 Using this fact one can prove the following lemma using $\Delta_0$-separation which works in every $\mathcal{T}$.
 \begin{lemma}
 If $\varphi$ is a formula in $\mathsf{K}_{0}[\gamma]$, $v,v'$ are fresh variables and $x$ is a variable, then $$\mathcal{T}\vdash \gamma\rightarrow\forall v\exists v'\forall x(x\in v'\leftrightarrow x\in v\wedge \varphi).$$
 \end{lemma}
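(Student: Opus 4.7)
The plan is to reduce bounded separation for a formula $\varphi \in \mathsf{K}_{0}[\gamma]$ to $\Delta_0$-separation for its associated $\Delta_0$-formula $\sigma(\varphi)$. The bridge between the two is the auxiliary claim that, under the assumption $\gamma$, the formula $\varphi$ is provably equivalent to $\sigma(\varphi)$ once the fresh variables $z$ introduced by clause 3 in the definition of $\widetilde{\mathsf{K}_{0}}[\gamma]$ are instantiated by their (unique) defining witnesses. Once this equivalence is in hand, one applies $\Delta_0$-separation in $\mathcal{T}$ to $\sigma(\varphi)$, treating those witnesses as parameters, and transports the resulting $v'$ back along the equivalence.

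I would prove the equivalence by induction on the generation of $\widetilde{\mathsf{K}_{0}}[\gamma]$. The atomic cases $\bot$, $x=y$ and $x \in y$ are immediate, since $\sigma$ acts as the identity there. The boolean cases $\varphi \wedge \psi$, $\varphi \vee \psi$ and $\varphi \rightarrow \psi$ follow from the induction hypotheses, because $\sigma$ commutes with these connectives. The substantive case is clause 3, say $\varphi \equiv \exists z(\delta \wedge \exists y \in z\, \psi)$: by hypothesis $\mathcal{T} \vdash \gamma \rightarrow \exists ! z\, \delta$, so under $\gamma$ we may existentially instantiate the unique $z$ satisfying $\delta$, and then $\varphi$ is equivalent to $\exists y \in z\, \psi$, which by the induction hypothesis applied to $\psi$ is equivalent to $\exists y \in z\, \sigma(\psi)$, namely $\sigma(\varphi)$ evaluated at this unique $z$. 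The variants with $\forall y \in z$ and without the inner bounded quantifier are handled identically.

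To conclude, fix $v$ and work under $\gamma$. Successively existentially instantiate the unique witnesses $z_1, \ldots, z_n$ for the clause-3 quantifiers appearing in $\varphi$, each $z_i$ being uniquely determined by $\gamma$ together with the previously instantiated witnesses. By the equivalence just established, $\varphi$ is equivalent in $\mathcal{T}$, modulo these instantiations, to $\sigma(\varphi)$, which is a $\Delta_0$-formula in the parameters $v, z_1, \ldots, z_n$ and the variables from $\mathbf{free}(\gamma)$. Applying $\Delta_0$-separation yields a $v'$ with $\forall x(x \in v' \leftrightarrow x \in v \wedge \sigma(\varphi))$, and by the equivalence this same $v'$ satisfies the desired biconditional with $\varphi$. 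The main technical point to handle carefully is the management of the nested clause-3 witnesses: each $\delta_i$ may refer to $z_j$ for $j < i$, so the uniqueness of the $z_i$'s must be threaded through the induction; the uniqueness clause $\mathcal{T} \vdash \gamma \rightarrow \exists ! z\, \delta$ built into the definition of $\widetilde{\mathsf{K}_{0}}[\gamma]$ is precisely what makes this bookkeeping work.
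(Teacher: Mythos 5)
Your proof follows exactly the route the paper intends: the paper's own justification is the one-line remark that the lemma follows from the $\sigma$-translation together with $\Delta_0$-separation, and your argument is a faithful, more detailed unfolding of that reduction --- establishing $\varphi \leftrightarrow \sigma(\varphi)$ under $\gamma$ once the clause-3 witnesses are instantiated via their defining formulas, applying $\Delta_0$-separation to $\sigma(\varphi)$ with those witnesses as parameters, and transporting the resulting $v'$ back along the equivalence. The one point you flag but do not fully resolve (defining formulas $\delta$ that mention variables bound inside $\varphi$, such as the $y$ of $\exists y\in z$, so that the corresponding witness cannot be fixed as a single outer parameter) is equally left implicit by the paper, so your proposal matches the paper's proof in both substance and level of detail.
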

 
\begin{theorem}
The following hold:
\begin{enumerate}
    \item if $\mathbf{emTT}_{\mathcal{T}}\vdash A=B\,type\,[\Gamma]$ (for $type$ being $col$, $set$, $prop$ or $prop_s$), then $\mathcal{T}\vdash \widehat{\Gamma}\rightarrow\forall u(\eta_A\leftrightarrow \eta_B)$;
    \item if $\mathbf{emTT}_{\mathcal{T}}\vdash A\,set\,[\Gamma]$, then $\mathcal{T}\vdash \widehat{\Gamma}\rightarrow\exists z\forall u(u\in z\leftrightarrow \eta_A)$ (where $z$ is a free variable);
    \item if $\mathbf{emTT}_{\mathcal{T}}\vdash \varphi\,prop_s\,[\Gamma]$, then there exists a formula $\psi$ in $\mathsf{K}_{0}[\widehat{\Gamma}]$ such that $\mathcal{T}\vdash \widehat{\Gamma}\rightarrow (\widehat{\varphi}\leftrightarrow \psi)$; 
    \item if $\mathbf{emTT}_{\mathcal{T}}\vdash a\in A\,[\Gamma]$, then $\mathcal{T}\vdash \widehat{\Gamma}\rightarrow\exists u(\delta_a\wedge \eta_A)$;
    \item if $\mathbf{emTT}_{\mathcal{T}}\vdash a=b\in A\,[\Gamma]$, then $\mathcal{T}\vdash \widehat{\Gamma}\rightarrow\exists u(\delta_a\wedge \delta_b\wedge \eta_A)$.
\end{enumerate}
\end{theorem}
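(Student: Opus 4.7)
The natural approach is a simultaneous induction on the length of the derivation in $\mathbf{emTT}_{\mathcal{T}}$, treating statements 1--5 as one big claim. I would first verify that the translations $\eta$, $\delta$, $\widehat{\cdot}$ respect substitution and renaming of variables (this is essentially the Substitution Lemma) and that the uniqueness statement $\delta_t \wedge \delta_t[v/u] \to u=v$ combined with part 4 gives us $\widehat{\Gamma} \to \exists!\,u\,\delta_a$ whenever $a \in A\,[\Gamma]$ is derivable --- this "existence and uniqueness of the denotation" is what makes $\delta$-clauses act like genuine definable terms and will be used constantly.

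Then I would proceed by case analysis on the last rule of the derivation. For the structural rules of $\mathbf{emTT}$ (weakening, variable, substitution, assumption) the claims are routine from the induction hypothesis and the Substitution Lemma. For the formation, introduction, elimination, and conversion rules of the $\mathbf{emTT}$ constructors ($\Sigma$, $\Pi$, $+$, $\mathsf{List}$, quotients, $\mathsf{N}_0$, $\mathsf{N}_1$, $\mathcal{P}(1)$, $A \to \mathcal{P}(1)$, propositional identities) the $\eta$- and $\delta$-clauses were designed to mirror exactly the intended set-theoretic meaning, so each case reduces to a finite verification in $\mathcal{T}$; the $\eta$-conversion and computation rules become provable equalities thanks to the characterization rules derived at the end of Step~4 (e.g.\ the explicit description of $(\Pi x\in A)B$ via $\mathsf{Rel}$, $\mathsf{Svl}$, $\mathsf{Tot}$). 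For the rules of Steps 1--3 of Section~\ref{setrules}, the correspondence is immediate: Step~1 rules translate to definitional facts about $\eta_{\{x|\,\varphi\}}$ and $\eta_{\mathbf{V}}$; Step~2 rules follow from $\Delta_0$-separation applied to $\eta_A$ plus the fact that $\lceil A\rceil$ is translated precisely as the unique set $u$ with $\forall v(v\in u \leftrightarrow \eta_A[v/u])$; Step~3 rules translate directly to the corresponding axioms of $\mathcal{T}$ (extensionality, pairing, union, infinity, $\in$-induction, and the theory-specific axioms of power-set, separation, replacement, strong collection and subset collection).

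Part 2 is the crux on the structural side: to derive $\mathcal{T}\vdash \widehat{\Gamma} \to \exists z\,\forall u(u\in z\leftrightarrow \eta_A)$ when $A$ is asserted to be a set, I would use induction on the set-formation rules. For primitive set constructors this reduces to the relevant axiom of $\mathcal{T}$ (e.g.\ $\Sigma$ needs replacement or strong collection applied to $\eta_B$; $A/R$ needs replacement through the equivalence-class construction; $\mathsf{List}(A)$ needs the fact that $\omega$ exists together with replacement). For the Step~2 rule that turns a collection extensionally equal to an element of $\mathbf{V}$ into a set, the witness is literally that element. For a small proposition viewed as a set, I invoke $\Delta_0$-separation on $\{0\}$, using part 3.

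Part 3 is the deepest obstacle and should be handled by a dedicated sub-induction on the small-proposition formation rules, keeping track of a formula $\psi \in \mathsf{K}_0[\widehat{\Gamma}]$ together with the proof that $\widehat{\Gamma} \to (\widehat{\varphi}\leftrightarrow \psi)$. The atomic cases $\bot$, $a=_{\mathbf{V}} b$, $a\,\varepsilon\,b$ give rise to formulas of shape $\exists u(\delta_a \wedge \delta_b \wedge \ldots)$, which are exactly the $\mathsf{K}_0$-clause $\exists z(\delta \wedge \varphi)$ provided we have established $\widehat{\Gamma} \to \exists!\,u\,\delta_a$ (this uses parts 2 and 4, noting that membership and equality only become \emph{small} propositions when their arguments are in $\mathbf{V}$ or in a set). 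Propositional identities $a=_A b$ on a set $A$ are rewritten to $a\,\varepsilon\,A \wedge b\,\varepsilon\,A \wedge a=_{\mathbf{V}} b$ via Step~2, reducing to the previous cases; bounded quantifiers $(\forall x\in A)\varphi$ and $(\exists x\in A)\varphi$ with $A$ a set are exactly the $\mathsf{K}_0$-clauses $\exists z(\delta \wedge \forall y\in z\,\psi)$ and $\exists z(\delta \wedge \exists y\in z\,\psi)$, where $\delta$ is the formula defining $z$ as the set realizing $A$ granted by part 2. Closure under $\wedge, \vee, \rightarrow$ is immediate from the $\mathsf{K}_0$-closure clauses. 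The $\Delta_0$-separation lemma for $\mathsf{K}_0$ then feeds back into the set-formation arguments of part 2 (notably for $A/R$ and for the $\mathbf{CZF}$ separation on small propositions), so the two inductions are genuinely interleaved --- keeping this dependency acyclic is what I expect to be the main delicate point of the whole argument.
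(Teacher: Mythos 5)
Your proposal is correct and follows essentially the same route as the paper: a simultaneous induction on proof-trees in $\mathbf{emTT}_{\mathcal{T}}$, supported by the Substitution Lemma, the uniqueness of $\delta_t$, and the $\mathsf{K}_0[\widehat{\Gamma}]$/$\Delta_0$-separation machinery for part 3 (your treatment of the bounded quantifier case matches the one worked example the paper gives). You actually supply more detail than the paper, which dismisses the bulk of the argument as "long but straightforward"; your observation that parts 2 and 3 are genuinely interleaved, with the simultaneity of the induction keeping the dependency acyclic, is accurate.
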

\begin{proof}This is a long but straightforward proof made simultaneously by induction on complexity of proof-trees in $\mathbf{emTT}_{\mathcal{T}}$. As an example, we show only one case relative to item 3., namely that of a small proposition obtained through a bounded universal quantifier with respect to a set. 

Assume that the judgement $(\forall x\in A)\varphi\,prop_s\,[\Gamma]$ is deduced in $\mathbf{emTT}_{\mathcal{T}}$ from the judgements $A\,set\,[\Gamma]$ and $\varphi\,prop_s\,[\Gamma,x\in A]$. Then, by inductive hypothesis, we know that $\mathcal{T}\vdash \widehat{\Gamma}\rightarrow \exists z\forall u(\eta_A\leftrightarrow u\in z)$ and that there exists a $\mathsf{K}_{0}[\widehat{[\Gamma,x\in A]}]$-formula $\psi$ such that $\mathcal{T}\vdash \widehat{[\Gamma,x\in A]}\rightarrow (\widehat{\varphi}\leftrightarrow \psi)$. The first one is equivalent to 
$$\mathcal{T}\vdash \widehat{\Gamma}\rightarrow \exists z\delta_{\lceil A\rceil}[z/u]$$
while from the second one we obtain
$$\mathcal{T}\vdash \widehat{\Gamma}\rightarrow \forall x( \eta_{A}[x/u]\rightarrow (\widehat{\varphi}\leftrightarrow \psi))$$
Assuming $\widehat{\Gamma}$, from these it follows in $\mathcal{T}$ that $\widehat{(\forall x\in A)\varphi}:\equiv \forall x(\eta_A[x/u]\rightarrow \widehat{\varphi})$ is equivalent to $\forall x(\eta_A[x/u]\rightarrow \psi)$ which is equivalent to $\exists z(\delta_{\lceil A\rceil}[z/u]\wedge \forall x\in z\,\psi)$. Since this last formula is a $\mathsf{K}_{0}[\widehat{\Gamma}]$-formula, we can conclude.

\end{proof}

\begin{cor}
Let $\varphi$ be a pre-proposition of $\mathbf{emTT}_{\mathcal{T}}$. If $\mathbf{emTT}_{\mathcal{T}}\vdash \mathsf{true}\in \varphi\,[\Gamma]$, then $\mathcal{T}\vdash \widehat{\Gamma}\rightarrow\widehat{\varphi}$.
\end{cor}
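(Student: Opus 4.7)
The plan is to deduce this directly from item 4 of the preceding theorem by unfolding the translations of $\mathsf{true}$ and of a proposition regarded as a collection.

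First, assume $\mathbf{emTT}_{\mathcal{T}}\vdash \mathsf{true}\in\varphi\,[\Gamma]$. Since this is a typing judgement of the form $a\in A$, with $a\equiv\mathsf{true}$ and $A\equiv\varphi$, item 4 of the theorem yields
\[
\mathcal{T}\vdash \widehat{\Gamma}\rightarrow \exists u(\delta_{\mathsf{true}}\wedge \eta_{\varphi}).
\]
Now I would unfold the two translations using the defining clauses: $\delta_{\mathsf{true}}$ is $u=0$, and $\eta_{\varphi}$ (for $\varphi$ a pre-proposition viewed as a pre-collection) is $u=0\wedge \widehat{\varphi}$. Hence the consequent above simplifies to $\exists u(u=0\wedge u=0\wedge \widehat{\varphi})$.

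The final step is to observe that $u$ is a fresh placeholder variable and does not occur free in $\widehat{\varphi}$ (the translation $\widehat{\cdot}$ on pre-propositions produces a formula whose free variables are those of the source pre-proposition, and $u$ is reserved for the $\delta/\eta$ translations of terms and collections). Therefore $\exists u(u=0\wedge \widehat{\varphi})$ is provably equivalent in $\mathcal{T}$ to $\widehat{\varphi}$, witnessed by the term $0$. Chaining the implications gives $\mathcal{T}\vdash \widehat{\Gamma}\rightarrow\widehat{\varphi}$, as required.

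I do not expect any real obstacle: the whole argument is a one-line unfolding once the theorem is in hand. The only subtlety worth checking explicitly is the freshness of $u$ with respect to $\widehat{\varphi}$, which is an immediate syntactic invariant of the second translation (by induction on pre-propositions, $u$ never becomes free), so the reduction of $\exists u(u=0\wedge\widehat{\varphi})$ to $\widehat{\varphi}$ is sound.
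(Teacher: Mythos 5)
Your proof is correct and follows exactly the route of the paper: apply item 4 of the preceding theorem to the judgement $\mathsf{true}\in\varphi\,[\Gamma]$, unfold $\delta_{\mathsf{true}}$ and $\eta_{\varphi}$ to get $\widehat{\Gamma}\rightarrow\exists u(u=0\wedge\widehat{\varphi})$, and discharge the existential. Your explicit remark that $u$ does not occur free in $\widehat{\varphi}$ is a small but welcome addition that the paper leaves implicit.
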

\begin{proof}
 From point 4.\ in Theorem \ref{emttzf}, it follows that if  $\mathbf{emTT}\vdash \mathsf{true}\in \varphi\,[\Gamma]$, then $\mathcal{T}\vdash \widehat{\Gamma}\rightarrow \exists u(\delta_\mathsf{true}\wedge \eta_\varphi)$ which means that $\mathcal{T}\vdash \widehat{\Gamma}\rightarrow\exists u(u=0\wedge \widehat{\varphi})$, Thus $\mathcal{T}\vdash \widehat{\Gamma}\rightarrow\widehat{\varphi}$.
\end{proof}

The theorems above show that the theories $\mathbf{emTT}_{\mathcal{T}}$ and $\mathcal{T}$ are equivalent. In fact, we have seen that:
\begin{enumerate}
    \item every formula of $\mathcal{T}$ is equivalent to one of the form $\widehat{\psi}$;
    \item every proposition of $\mathbf{emTT}_{\mathcal{T}}$ is equivalent to one of the form $\widetilde{\varphi}$; 
    \item if $\varphi$ is a theorem of $\mathcal{T}$, then $\widetilde{\varphi}$ is a theorem of $\mathbf{emTT}_{\mathcal{T}}$;
    \item if $\psi$ is a theorem of $\mathbf{emTT}_{\mathcal{T}}$, then $\widehat{\psi}$ is a theorem of $\mathcal{T}$.
\end{enumerate}
However, the relation between $\mathbf{emTT}_{\mathcal{T}}$ and $\mathcal{T}$ arising from the theorems above is much richer, as we will illustrate now.
\begin{enumerate}
    \item Let us denote with $\mathbf{Ctx}[\mathbf{emTT}_{\mathcal{T}}]$ the (meta)set of contexts of $\mathbf{emTT}_{\mathcal{T}}$, that is precontexts $\Gamma$ such that $\mathbf{emTT}_{\mathcal{T}}\vdash \Gamma\,context$ and with $\mathbf{Ctx}[\mathcal{T}]$ the (meta)set of pairs $([x_1,...,x_n],[\varphi_1,...,\varphi_n])$ where $[x_1,...,x_n]$ is a finite (possibly empty) list of variables and $\varphi_1,...,\varphi_n$ are formulas of $\mathcal{T}$ such that $\mathbf{free}(\varphi_i)\subseteq\{x_1,...,x_i\}$ for every $i=1,...,n$.

    An equivalente relation is defined on $\mathbf{Ctx}[\mathbf{emTT}_{\mathcal{T}}]$ as follows: $$[x_1\in A_1,...,x_n\in A_n]\equiv [y_1\in B_1,...,y_m\in B_m]$$ if and only if $n=m$, $x_i$ coincides with $y_i$ for every $i=1,...,n$ and $\mathbf{emTT}_{\mathcal{T}}\vdash A_i=B_i\,[x_1\in A_1,...,x_{i-1}\in A_{i-1}]$ for every $i=1,...,n$.  An equivalente relation is defined on $\mathbf{Ctx}[{\mathcal{T}}]$ as follows: $$([x_1,...,x_{n}],
    [\varphi_1,...,\varphi_n])\equiv' ([y_1,...,y_{m}],[\psi_1,...,\psi_m])$$ if and only if $n=m$, $[x_1,...,x_n]=[y_1,...,y_n]$ and $$\mathcal{T}\vdash \top\wedge \varphi_1\wedge...\wedge \varphi_{i-1}\rightarrow (\varphi_i\leftrightarrow \psi_i)$$ for every $i=1,...,n$. 

    A bijection between $\mathbf{Ctx}[\mathbf{emTT}_{\mathcal{T}}]/\equiv$ and $\mathbf{Ctx}[\mathcal{T}]/\equiv'$ is induced by the pair of functions
    $$\mathbf{Ctx}[\mathbf{emTT}_{\mathcal{T}}]\rightarrow \mathbf{Ctx}[\mathcal{T}]$$
    $$[x_1\in A_1,...,x_{n}\in A_n]\mapsto ([x_1,...,x_n],[\eta_{A_1}[x_1/u],...,\eta_{A_n}[x_n/u]])$$
    $$\mathbf{Ctx}[\mathcal{T}]\rightarrow \mathbf{Ctx}[\mathbf{emTT}_{\mathcal{T}}]$$
    $$([x_1,...,x_n],[\varphi_1,...,\varphi_n])\mapsto [x_1\in \{x_1|\,\widetilde{\varphi_1}\},...,x_n\in \{x_n|\,\widetilde{\varphi_n}\}]$$
    Notice that in the first case the conjunction of the list of formulas associated to a context $\Gamma$ of $\mathbf{emTT}_{\mathcal{T}}$ is exactly $\widehat{\Gamma}$.

    \item Let us fix a context $\Gamma\in \mathbf{Ctx}[\mathbf{emTT}_{\mathcal{T}}]$ and denote with $\mathbf{Col}_{\Gamma}$ the (meta)set of all pre-collections $A$ such that $\mathbf{emTT}_{\mathcal{T}}\vdash A\,col$ endowed with the equivalence relation defined by $A\equiv_\Gamma B$ if and only if $\mathbf{emTT}_{\mathcal{T}}\vdash A=B\,col\,[\Gamma]$. Moreover, denote with $\mathbf{DC}_{\widehat{\Gamma}}$ the (meta)set of definable classes (up to $\widehat{\Gamma}$) that is the (meta)set whose elements are formulas $\varphi$ such that $\mathbf{free}(\varphi)\subseteq \mathbf{free}(\widehat{\Gamma})\cup \{y\}$ with $y\notin \mathbf{free}(\Gamma)$; an equivalence relation on $\mathbf{DC}_{\widehat{\Gamma}}$ is given by $\varphi\equiv_{\widehat{\Gamma}}\psi$ if and only if $\mathcal{T}\vdash \widehat{\Gamma}\rightarrow (\varphi \leftrightarrow \psi)$.
    
    A bijection between $\mathbf{Col}_{\Gamma}/\equiv_\Gamma$ and $\mathbf{DC}_{\widehat{\Gamma}}/\equiv_{\widehat{\Gamma}}$ is induced by the following functions
    $$\mathbf{Col}_{\Gamma}\rightarrow \mathbf{DC}_{\widehat{\Gamma}}\qquad \mathbf{DC}_{\widehat{\Gamma}}\rightarrow \mathbf{Col}_{\Gamma} $$
    $$A\mapsto \eta_A[y/u]
\qquad\qquad \varphi\mapsto \{y|\,\widetilde{\varphi}\}$$
Moreover, this bijection restricts to a bijection between $\mathbf{Set}_{\Gamma}/\equiv_\Gamma$ and $\mathbf{DS}_{\widehat{\Gamma}}/\equiv_{\widehat{\Gamma}}$
where $\mathbf{Set}_{\Gamma}$ is the (meta)subset of $\mathbf{Col}_{\Gamma}$ consisting of those $A$ for which $\mathbf{emTT}_{\mathcal{T}}\vdash A\,set\,[\Gamma]$ and $\mathbf{DS}_{\widehat{\Gamma}}$ is the (meta)subset of $\mathbf{DC}_{\widehat{\Gamma}}$ consisting of those $\varphi$ for which $\mathcal{T}\vdash \exists z\forall y(y\in z\leftrightarrow \varphi)$ (where $z$ is fresh).
    \item Let us fix a context $\Gamma\in \mathbf{Ctx}[\mathbf{emTT}_{\mathcal{T}}]$ and denote with $\mathbf{Prop}_{\Gamma}$ the (meta)set of all pre-proposition $\varphi$ such that $\mathbf{emTT}_{\mathcal{T}}\vdash \varphi\,prop\,[\Gamma]$ endowed with the equivalence relation defined by $\varphi\equiv^p_\Gamma \psi$ if and only if $\mathbf{emTT}_{\mathcal{T}}\vdash \varphi = \psi \,prop\,[\Gamma]$. Moreover, denote with $\mathbf{Form}_{\widehat{\Gamma}}$ the (meta)set of formulas $\varphi$ of $\mathcal{T}$ such that $\mathbf{free}(\varphi)\subseteq \mathbf{free}(\widehat{\Gamma})$ endowed with the equivalence relation given by $\varphi\equiv^p_{\widehat{\Gamma}}\psi$ if and only if $\mathcal{T}\vdash \widehat{\Gamma}\rightarrow (\varphi \leftrightarrow \psi)$.
    
    A bijection between $\mathbf{Prop}_{\Gamma}/\equiv^p_{\Gamma}$ and $\mathbf{Form}_{\widehat{\Gamma}}/\equiv^p_{\widehat{\Gamma}}$ is induced by the functions
    $$\mathbf{Prop}_{\Gamma}\rightarrow \mathbf{Form}_{\widehat{\Gamma}}\qquad \mathbf{Form}_{\widehat{\Gamma}}\rightarrow \mathbf{Prop}_{\Gamma} $$
    $$\varphi\mapsto \widehat{\varphi}\qquad\qquad\qquad \qquad \psi\mapsto \widetilde{\psi}$$
    \item Let us now fix $\Gamma\in \mathbf{Ctx}[\mathbf{emTT}_{\mathcal{T}}]$ and $A\in \mathbf{Col}_{\Gamma}$. We denote with $\mathbf{Ext}(A)_{\Gamma}$ the (meta)set containing those pre-terms $a$ such that $\mathbf{emTT}_{\mathcal{T}}\vdash a\in A\,[\Gamma]$ endowed with the equivalence relation defined by $a\equiv^A_\Gamma b$ if and only if $\mathbf{emTT}_{\mathcal{T}}\vdash a=b\in A\,[\Gamma]$. Moreover, we denote with $\mathbf{DEl}(A)_{\widehat{\Gamma}}$ the (meta)set of formulas $\delta$ such that 
    \begin{enumerate}
        \item $\mathbf{free}(\delta)\subseteq \mathbf{free}(\Gamma)\cup \{y\}$ in which $y$ is a fresh variable;
        \item $\mathcal{T}\vdash \widehat{\Gamma}\rightarrow \exists ! y \,\delta$;
        \item $\mathcal{T}\vdash \widehat{\Gamma}\wedge \delta\rightarrow \eta_A[y/u]$.
    \end{enumerate}
    endowed with the equivalence relation defined by $\delta\equiv^A_{\widehat{\Gamma}}\delta'$ if and only if $\mathcal{T}\vdash \widehat{\Gamma}\rightarrow (\delta\leftrightarrow \delta')$. A bijection between $\mathbf{Ext}(A)_{\Gamma}/\equiv^{A}_{\Gamma}$ and $\mathbf{DEl}(A)_{\widehat{\Gamma}}/\equiv^{A}_{\widehat{\Gamma}}$ is determined by the following functions:
    $$\mathbf{Ext}(A)_{\Gamma}/\equiv^{A}_{\Gamma}\rightarrow \mathbf{DEl}(A)_{\widehat{\Gamma}}/\equiv^{A}_{\widehat{\Gamma}}\qquad \mathbf{DEl}(A)_{\widehat{\Gamma}}/\equiv^{A}_{\widehat{\Gamma}}\rightarrow \mathbf{Ext}(A)_{\Gamma}/\equiv^{A}_{\Gamma}$$
    $$\qquad \qquad a\mapsto \delta_a \qquad\qquad\qquad\qquad \qquad\qquad \delta\mapsto \bigcup\{y\in \lceil A\rceil|\,\widetilde{\delta}\}$$
    
\end{enumerate}
\section{Conclusions}
We have proven here that one can extend the extensional level of the Minimalist Foundation to the main intuitionistic and classical axiomatic set theories by adding rules and preserving the logical meaning. This provides a very strong notion of compatibility between these theories confirming the fact that the Minimalist Foundation is a suitable common ground for comparison between set-theoretical foundational theories and intuitionistic type theories.

\bibliography{bibliopsp}
\end{document}